\DeclareMathAlphabet{\mathpzc}{OT1}{pzc}{m}{it}
\theoremstyle{plain}
\newtheorem{theorem}{Theorem}[section]
\newtheorem*{theorem*}{Theorem}
\newtheorem{lemma}[theorem]{Lemma}
\newtheorem*{claim*}{Claim}
\newtheorem{proposition}[theorem]{Proposition}
\newtheorem{corollary}[theorem]{Corollary}
\theoremstyle{definition}
\newtheorem{remark}[theorem]{Remark}
\numberwithin{equation}{section}
\numberwithin{figure}{section}
\newcommand{\Cent}{\mathrm{Cent}}
\newcommand{\Gal}{\mathrm{Gal}}
\newcommand{\Aut}{\mathrm{Aut}}
\newcommand{\ignore}[1]{}
\begin{document}
\title{The isotopy classes of Petit division algebras}

\author{Susanne Pumpl\"un}
\address{School of Mathematical Sciences, University of Nottingham,
Nottingham NG7 2RD}
\email{Susanne.Pumpluen@nottingham.ac.uk}

\keywords{Nonassociative algebras, Petit algebras, isotopy, matrix codes.}

\subjclass[2020]{Primary: 17A35 Secondary: 17A60, 17A99}

\date{\today}
\maketitle

\begin{abstract}
Let $R=K[t;\sigma]$ be a skew polynomial ring, where $K$ is  a cyclic Galois field extension of degree $n$ with Galois group generated by $\sigma$.   We show that two irreducible similar skew polynomials $f,g\in R$ are similar if and only if they have the same bound. 
We prove that for two irreducible similar skew polynomials $f,g\in R$ the nonassociative Petit division algebras  $R/Rf$ and $R/Rg$ are isotopic. We then refine this result and demonstrate that $f$ and $g$
 also yield two isotopic nonassociative Petit algebras  $R/Rf$ and $R/Rg$, when the two irreducible polynomials in $F[x]$ that define the minimal central left multiples of $f$ and $g$
 have identical degree and lie in the same orbit of some group $G$.  For finite field we explicitly compute the upper bound for the number of non-isotopic algebras $R/Rf$ obtained by Lavrauw and  Sheekey.

\end{abstract}

\section{Introduction}

Families of nonassociative rings and algebras can be classified up to isomorphism or, much more generally, up to isotopy. Both times, the structure of the algebra is preserved to a large extent. Two algebras which are isomorphic also are isotopic, the converse holds only in very special cases.  Semifields (i.e., division algebras over finite fields) are usually investigated and classified only up to isotopy, due to their intimate connection with projective planes that can be classified only knowing the algebra's isotopy class.

Moreover, there is a one-one correspondence between division algebras and square linear maximum-rank distance (MRD) matrix codes  \cite{CKWW2015} and isotopic algebras yield equivalent codes.
Equivalent matrix codes have the same behaviour and invariants, hence in order not to duplicate known matrix codes it is important to know their equivalence classes.

Let $K/F$ be a cyclic Galois field extension of degree $n$ with Galois group generated by $\sigma$, and consider the skew polynomial ring $R=K[t;\sigma]$. We will look at the question when two Petit division algebras $R/Rf$ and $R/Rg$ are isotopic. Curiously, the methods we present here  do not extend to the case of arbitrary Petit algebras. We need $f$ to be irreducible, thus will only cover results on the isotopy of Petit division algebras.

 Petit division algebras $R/Rf$ \cite{P66} are defined employing irreducible skew polynomials $f\in R$, and make up one of the largest known classes of semifields \cite{LS}. Subsequently, they were  investigated  over general base fields and rings, e.g.  \cite{P.codes, P16}. The structure of Petit algebras strongly reflects the properties of the skew polynomial $f$, for instance the right nucleus of $R/Rf$ is the eigenspace of $f$.

 For one small family of Petit division algebras over finite fields isotopy and isometry are equivalent notions \cite[Theorem 9]{Sandler1962}. This result was recently extended to arbitrary base fields. We now also know that the choice of the generator of ${\Gal}(K/F)$ is sometimes relevant in the algebra construction $K[t;\sigma]/K[t;\sigma]f$ as well. 
 More precisely,  suppose that ${\Aut}(K)$ is abelian and that $\sigma_1$, $\sigma_2$ are two generators of ${\rm Gal}(K/F)$. We showed that if
   $n \geq m $ and $K[t;\sigma_1]/K[t;\sigma_1](t^m-a_1)$ is a proper nonassociative division algebra,
   then  $K[t;\sigma_1]/K[t;\sigma_1](t^m-a_1)$ and
 $K[t;\sigma_2]/K[t;\sigma_2](t^m-a_2)$ being isotopic implies that $K[t;\sigma_1]/K[t;\sigma_1](t^m-a_1)$ and
 $K[t;\sigma_2]/K[t;\sigma_2](t^m-a_2)$ are isomorphic and $\sigma_1=\sigma_2$ \cite{NevinsPumpluen2025}.

In this paper, we first prove that two irreducible skew polynomials $f$ and $g$ in $R$ are similar if and only if their bounds $f^*$ and $g^*$ are the same
(Theorem \ref{thm:irred}).

 If $f$ and $g$ are irreducible and similar then the nonassociative division rings  $R/Rf$ and $R/Rg$ are isotopic (Theorem \ref{thm:easy}).

 Let $I(F,m_0)$ denote the set of all monic irreducible polynomials in $F[x]$ of degree $m_0$.
We then take the semidirect product
$$G=N_{K/F}(K^\times)\rtimes \{\tau\in {\rm Aut}(K)\,|\, \sigma\circ\tau=\tau\circ \sigma \text{ and } \tau|_F\in {\rm Aut}(F) \}$$
$$=\{(\lambda,\tau)\,|\,\lambda\in N_{K/F}(K^\times),\tau\in {\rm Aut}(K)\text{ such that }\sigma\circ\tau=\tau\circ \sigma \text{ and } \tau|_F\in {\rm Aut}(F) \},$$
 and define an action of $G$ on $I(F,m_0)$ via
$$g^{(\lambda,\tau)}(x)=\lambda^{-m_0}g^\tau(\lambda x),$$
 where  $g^\tau(x)=\sum_{i=0}^s \tau(c_i) x^i$ for $g(x)=\sum_{i=0}^s c_ix^i \in F[x]$. Our Main Theorem \ref{thm:main12}  concludes that for any two  irreducible $f,g\in R$ of degree $m$ with   monic bounds
 $f^*=\hat{h}_1(t^n)$ and $g^*=\hat{h}_2(t^n)$ and $\hat{h}_1,\hat{h}_2\in F[x]$ both monic of the same degree $m_0$,
  such that $\hat{h}_1$ and $\hat{h}_2$ lie in the same $G$-orbit of $I(F,m_0)$, the Petit algebras $R/Rf$ and $R/Rg$  are isotopic division rings.
  
   This generalises \cite[Theorem 12]{LS} which was proved for finite fields. In this general setting, however, the degree $m_0$ of the irreducible monic polynomial  $\hat{h}(x)\in F[x]$ does not automatically predetermine the degree of $f$ to be $m=m_0$ as in the finite field case. In fact, different irreducible monic polynomials  $\hat{h}(x)\in I(F,m_0)$ will define bounds $h(t)=\hat{h}(t^n)$ of skew polynomials $f\in R$ with corresponding Petit algebras $R/Rf$, where the degree of $f$ (and hence the dimension of the algebra) will depend on the number $k$ of irreducible factors of $h(t)=\hat{h}(t^n)$ in $R$, and $f$ need not be irreducible just because $h(x)$ is.
    
    When $m\geq n$ and $f(t)=t^m-a$ is irreducible with bound $f^*=\hat{h}(t^n)$ then
    for all $g(t)=t^m-b$ with bound $g^*$ defined via some $\hat{h_0}(x)$ that lies in the $G$-orbit of $\hat{h}(x)$,  we even have $K[t;\sigma]/K[t;\sigma](t^m-a)\cong K[t;\sigma]/K[t;\sigma](t^m-b)$ (Corollary \ref{thm:maincor}).

 We thus showed that when two irreducible polynomials $\hat{h}_1,\hat{h}_2\in F[x]$  of identical degree  lie in the same  orbit of $G$, then for the
 irreducible skew polynomials in $f,g\in R=K[t;\sigma]$ whose bounds are given by
 $f^*=\hat{h}_1(t^n)$ and $g^*=\hat{h}_2(t^n)$, the associated MRD codes $C=\{R_a\,|\,  a\in R/Rf\}\subset M_k(K)$ and $C'=\{R_a\,|\,  a\in R/Rg\}\subset M_k(K)$ are equivalent (Theorem \ref{t:main2}). These matrix codes are examples of the codes proposed in
\cite{Sheekey2019}.

We finish by explicitly computing the number of distinct orbits in $I(m)$ under the action of $G$ for finite fields (Theorem \ref{thm:countingorbits}), making the upper bound  for the number of non-isotopic  semifields $R/R/f$ of fixed dimension from \cite[Theorem 12]{LS} explicit in Corollary \ref{cor:12new} in Section \ref{sec:finite}.

\section{Preliminaries}

 \subsection{Nonassociative algebras}

Let $F$ be a field. In the following we will consider  nonassociative rings and algebras over $F$. A nonassociative ring or $F$-algebra $A$ is called a \emph{proper} nonassociative ring, respectively algebra, if it is not associative. 

A nonassociative ring $A\not=0$ is called a \emph{proper} nonassociative ring if it is not associative, and  a \emph{division ring} if for any $a\in A$, $a\not=0$, the left multiplication  with $a$, $L_a(x)=ax$, and the right multiplication with $a$, $R_a(x)=xa$, are bijective.  A \emph{semifield} is a unital nonassociative division ring with finitely many elements.
 An $F$-algebra $A\not=0$ is called a \emph{division algebra} if $A$ is a division ring.
 If $A$ has finite dimension over $F$, $A$ is a division algebra if
and only if $A$ has no zero divisors \cite[pp. 15, 16]{Sch}.

Let $A$, $A'$ be two unital nonassociative division rings.
  The rings $A, A'$ are called {\it isotopic}, if there exist bijective additive  maps $f,g,h:A\rightarrow A'$, each of them linear over some
subfield of $A$, such that
 $h(xy)=f(x) g(y)$ for all $ x,y\in A.$ If $f=g=h$ then $A\cong A'$.

For $f,g,h\in {\rm Gl}(V)$  the algebra $A^{(f,g,h)}$ is called an
{\it isotope} of $A$, and is defined as the vector space $V$ together with the new multiplication
$$xA^{(f,g,h)}y=h(f(x)A g(y))$$
for $x,y\in V$. Two algebras $A, A'\in {\rm Alg}(V)$ are called {\it isotopic}, if
$xAy=h(f(x) A' g(y))$ for all $ x,y\in V.$ If $f=g=h^{-1}$ then $A\cong A'$.

\subsection{Twisted polynomial rings}

Let $K/F$ be a cyclic Galois field extension of degree $n$ with Galois group generated by $\sigma$.
 The \emph{twisted polynomial ring} $R=K[t;\sigma]$
is the ring of polynomials $\{a_0+a_1t+\dots +a_nt^n\,|\, a_i\in K\}$, with term-wise addition and multiplication given by the rule
$ta=\sigma(a)t$ for all $a\in K$ \cite{O}; see also \cite[Chapter I]{J96}. The constant nonzero polynomials $K^\times$ are the units of $R$.

For $f=a_0+a_1t+\dots +a_nt^n\in R$ with $a_n\not=0$ define ${\rm
deg}(f)=n$ and put ${\rm deg}(0)=-\infty$. Then ${\rm deg}(fg)={\rm deg}
(f)+{\rm deg}(g).$
 An element $f\in R$ is called \emph{irreducible} if  it is not a unit and it has no proper factors, \emph{i.e.} there do not exist
 $g,h\in R$, neither a unit, such that $f=gh$.

 There exists a right division algorithm in $R=K[t;\sigma]$: for all $g,f\in R$, $f\neq 0$, there exist unique $r,q\in R$
  such that ${\rm deg}(r)<{\rm deg}(f)$ and $g=qf+r$ \cite[\S1.1]{J96}.
(Our terminology is the one used by Petit \cite{P66} and
 different from Jacobson's \cite{J96}, who calls what we call right a left division algorithm and vice versa.)
 For all $g \in R$ let $g\, {\rm mod}_r f$ denote the remainder of $g$ upon right division by $f$.


\subsection{Nonassociative algebras obtained from twisted polynomial rings} \label{subsec:structure}


In this paper, let $K/F$ be a cyclic Galois field extension of degree $n$ with Galois group ${\rm Gal}(K/F)=\langle \sigma \rangle$ and let $R=K[t;\sigma]$. Let $f \in R$ have degree $m$.

 The set $\{g\in R\,|\, {\rm deg}(g)<m\}$ endowed with the usual term-wise addition of polynomials and the multiplication $g \circ h = gh \, {\rm mod}_r f$, where the right-hand side is the remainder of dividing the polynomial $gh$ by $f$ from the right, is a unital nonassociative ring we denote by $S_f$ or $R/Rf$. We usually will simply use juxtaposition for writing the multiplication in $R/Rf$. Also, $R/Rf$ is a unital nonassociative algebra over $F$
\cite{P66}.
 We call $R/Rf$ a {\it Petit algebra}.
 For all $a \in K^{\times}$ we have $R/Rf = R/R(af)$, and if $f\in R$ has degree 1 then $R/Rf \cong K$.
In the following, we thus assume that $f$ is monic and that it has degree $m\geq 2$, unless specifically mentioned otherwise. The algebra
 $R/Rf$ is associative if and only if $f$ is \emph{right invariant}, i.e. if $Rf$ is a two-sided ideal in $R$.
In that case, $R/Rf$ is equal to the classical associative quotient algebra $R/(f)$. Note that  $f(t) = t^m - \sum\limits_{i=0}^{m-1}a_it^i \in R$ is right invariant if and only if $a_i \in F$ and $a_i(\sigma^m(d)-\sigma^i(d))=0$ for all $i \in \lbrace 0,1,\dots,m-1 \rbrace$ and for all $d \in K$ \cite[(15)]{P66}.

By  \cite[Theorem 1.1.22]{J96}, this means that $f$ is right invariant in $R$ if and only if $f(t)=ag(t)t^s$ for some $g \in C(R)$, $a\in K$, and some
integer $s\geq 0$.

If $R/Rf$ is not associative then ${\rm Nuc}_l(R/Rf)={\rm Nuc}_m(R/Rf)=K$
and $C(S_f)=F$.   Moreover, then
$${\rm Nuc}_r(R/Rf)=\{g\in R\,|\, {\rm deg}(g)<m \text{ and }fg\in Rf\}.$$
 is the eigenspace of $f\in R$ \cite{P66}. 

The Petit algebra $R/Rf$ is a division algebra, if and only if $f$ is irreducible in $R$, if and only if ${\rm Nuc}_r(R/Rf)$ is a division algebra.  It is well known that each nontrivial zero divisor $q$ of $f$ in ${\rm Nuc}_r(R/Rf)$ gives a proper factor ${\rm gcrd}(q,f)$ of $f$, where ${\rm gcrd}(q,f)$ denotes the greatest common right divisor of $q$ and $f$ in $R$,  e.g. see \cite{GLN13}.

\begin{proposition}\label{thm:7}
Let $F_0\subset F$ be any subfield such that $\Gal(K/F_0)$ is abelian.
Let  $\tau\in \Gal(K/F_0)$.
\\ (i) The map $G:R\rightarrow R$ defined via $G(t)=\sum_{i=0}^{m-1} k_i t^i$, $k_i \in K$, extends $\tau$ to an $F_0$-algebra isomorphism $G: K[t;\sigma] \rightarrow K[t;\sigma]$ if and only if
$G(t)=\alpha t$ for some $\alpha \in K^\times$ and $\sigma\circ \tau =\tau\circ \sigma$. In that case,
$$G(\sum_{i=0}^l b_it^i)=\sum_{i=0}^l \tau(b_i) (\alpha t)^i.$$
We write $G=G_{\tau,\alpha}$.
\\ (ii) Let $G_{\tau,\alpha}:R\rightarrow R$ be an $F_0$-algebra isomorphism extending $\tau$, then $R/Rf\cong R/R G_{\tau,\alpha}(f)$ as nonassociative $F_0$-algebras.
\end{proposition}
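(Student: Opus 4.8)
The plan is to prove (i) by writing $G(t)$ in coordinates and comparing coefficients in the image of the commutation relation $ta=\sigma(a)t$, and then to obtain (ii) as a formal consequence of (i) together with the uniqueness of remainders in the right division algorithm recalled in the Preliminaries.

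For the ``only if'' part of (i), suppose $G$ is an $F_0$-algebra isomorphism of $R=K[t;\sigma]$ with $G|_K=\tau$, and write $G(t)=\sum_{i=0}^{d}k_it^i$ with $k_d\neq 0$. First I would pin down $d=1$: since $G$ is determined by $G\bigl(\sum_i b_it^i\bigr)=\sum_i\tau(b_i)G(t)^i$, and since $t^ic=\sigma^i(c)t^i$ shows $G(t)^\ell$ has degree $d\ell$ with leading coefficient $\prod_{j=0}^{\ell-1}\sigma^{jd}(k_d)\neq 0$, the polynomial $G(p)$ has degree $d\cdot\deg p$ for every nonconstant $p$; hence the image of $G$ contains no polynomial whose degree is a positive non-multiple of $d$, so surjectivity of $G$ together with $\deg t=1$ forces $d=1$. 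Writing $G(t)=k_1t+k_0$ and applying $G$ to $ta=\sigma(a)t$, I expand $G(t)\tau(a)=\tau(\sigma(a))G(t)$ using $t^ic=\sigma^i(c)t^i$ and compare coefficients of $t$ and of $t^0$, obtaining $k_1\sigma(\tau(a))=\tau(\sigma(a))k_1$ and $k_0\tau(a)=\tau(\sigma(a))k_0$ for all $a\in K$. The first identity, with $k_1\neq 0$ in the field $K$, gives $\sigma\circ\tau=\tau\circ\sigma$; the second forces $k_0=0$, since $k_0\neq 0$ would yield $a=\sigma(a)$ for all $a\in K$, contradicting $\sigma\neq\mathrm{id}$. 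Thus $G(t)=\alpha t$ with $\alpha:=k_1\in K^\times$. For the ``if'' part, given $\alpha\in K^\times$ with $\sigma\circ\tau=\tau\circ\sigma$ I set $G_{\tau,\alpha}\bigl(\sum b_it^i\bigr)=\sum\tau(b_i)(\alpha t)^i$: this is additive, $F_0$-linear, restricts to $\tau$ on $K$, and is bijective because $G_{\tau^{-1},\,\tau^{-1}(\alpha^{-1})}$ is a two-sided inverse (a direct check, using that $\sigma$ commutes with $\tau^{-1}$ as well). Multiplicativity is checked on monomials $at^i,bt^j$: from $(\alpha t)c=\sigma(c)(\alpha t)$ one gets $(\alpha t)^ic=\sigma^i(c)(\alpha t)^i$, so using $\sigma^i\circ\tau=\tau\circ\sigma^i$ and $at^i\cdot bt^j=a\sigma^i(b)t^{i+j}$ one finds $G_{\tau,\alpha}(at^i)\,G_{\tau,\alpha}(bt^j)=\tau\bigl(a\sigma^i(b)\bigr)(\alpha t)^{i+j}=G_{\tau,\alpha}(at^i\cdot bt^j)$, and the displayed formula for $G$ on a general polynomial is then immediate. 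The only place demanding care here is this conjugation bookkeeping — keeping straight exactly where $\sigma\circ\tau=\tau\circ\sigma$ is used — which I regard as the main (mild) obstacle in (i).

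For (ii), write $G=G_{\tau,\alpha}$; as observed in the construction, $G$ preserves degrees, so $\deg G(f)=\deg f=m$. Since $G$ is a ring automorphism of $R$, $G(Rf)=G(R)G(f)=RG(f)$, hence $G$ restricts to a degree-preserving $F_0$-linear bijection $\bar G$ of $\{g\in R:\deg g<m\}$, the common underlying vector space of $R/Rf$ and $R/RG(f)$. To see $\bar G$ is an algebra homomorphism, take $g,h$ of degree $<m$ and write $gh=qf+r$ with $\deg r<m$, so $r$ is the product of $g$ and $h$ in $R/Rf$; applying $G$ gives $G(g)G(h)=G(q)G(f)+G(r)$ with $G(q)G(f)\in RG(f)$ and $\deg G(r)=\deg r<m$, so by uniqueness of the remainder upon right division by $G(f)$ the element $G(r)$ is exactly the product of $G(g)$ and $G(h)$ in $R/RG(f)$. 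Therefore $\bar G$ intertwines the two Petit multiplications, i.e.\ $R/Rf\cong R/RG(f)$ as nonassociative $F_0$-algebras (if a monic defining polynomial is preferred, replace $G(f)$ by $c^{-1}G(f)$ where $c\in K^\times$ is its leading coefficient, using $R/R(cf')=R/Rf'$). I expect part (ii) to be essentially formal once (i) is in hand; the only hypothesis doing real work is the uniqueness of the right-division remainder, precisely the form of the division algorithm recorded in the Preliminaries.
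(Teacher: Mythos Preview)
Your proof is correct. For part (ii) your argument coincides with the paper's: both observe that $G$ preserves degrees, hence restricts to a bijection of $R_m=\{g:\deg g<m\}$, and then use multiplicativity of $G$ together with uniqueness of the right-division remainder to conclude $G(g\circ_f h)=G(g)\circ_{G(f)}G(h)$.

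For part (i) the approaches differ. The paper does not argue directly; it simply invokes \cite[Theorem 3]{Ri} (Rimmer's classification of isomorphisms between skew polynomial rings), which already gives that any extension of $\tau$ to an isomorphism $K[t;\sigma]\to K[t;\sigma]$ must send $t$ to $\alpha t$ and that $\sigma\tau=\tau\sigma$. You instead give a self-contained elementary proof: a degree count (the image of $G$ contains only polynomials of degree divisible by $d=\deg G(t)$, forcing $d=1$ by surjectivity), followed by coefficient comparison in $G(ta)=G(\sigma(a)t)$ to kill the constant term and extract the commutation relation. Your route has the advantage of being entirely internal to the paper, with no external citation needed; the paper's route is shorter on the page and situates the result within Rimmer's general framework for skew polynomial ring isomorphisms. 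Both are clean, and your bookkeeping (in particular the inverse $G_{\tau^{-1},\tau^{-1}(\alpha^{-1})}$ and the monomial check of multiplicativity) is carried out correctly.
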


When $\tau\in \Gal(K/F_0)$  and $g(t)=\sum_{i=0}^l b_it^i\in R$ we will use the notation $g^{\tau}$ for $\sum_{i=0}^l \tau(b_i) t^i$.

\begin{proof}
$(i)$
By \cite[Theorem 3]{Ri}, the map $G:R\rightarrow R$ defined via $G(t)=\sum_{i=0}^{m-1} k t$, $k_i \in K$, extends $\tau\in \Gal(K/F_0)$ to an isomorphism $G: K[t;\sigma] \rightarrow K[t;\sigma]$ if and only if
$G(t)=\alpha t$ for some $\alpha \in K^\times$ and $\sigma\circ \tau =\tau\circ \sigma$.
\\ $(ii)$
Every isomorphism $G:R\rightarrow R$ extending $\tau\in \Gal(K/F_0)$  canonically induces an isomorphism between the nonassociative $F_0$-algebras $K[t;\sigma]/K[t;\sigma]f$  and  $ K[t;\sigma]/K[t;\sigma]G(f)$, since $G|_{R_m}:R_m\rightarrow R_m$ and   $G(g\circ_f h)=G(gh -s f)=G(h)G(g)-G(s)G(f)=G(g)\circ_{G(f)} G(h)$ for some $s\in R_m$.
\end{proof}

 When $n \geq m-1$ and  $R/Rf$ and $R/Rg$ are two  proper  Petit algebras, then every $F_0$-algebra isomorphism $G:R/Rf\rightarrow R/Rg$  is induced by an isomorphism $G_{\tau,\alpha}: K[t;\sigma]\rightarrow K[t;\sigma]$ which restricts to some $\tau\in {\rm Gal}(K/F_0)$ that commutes with $\sigma$ \cite{Pumpluen2025}.

\section{Isotopic Petit algebras}

A twisted polynomial  $f \in R$ is \emph{bounded} if there exists a nonzero polynomial $f^* \in R$,  such that $Rf^*$ is the largest two-sided ideal of $R$ contained in $Rf$. The polynomial $f^*$ is uniquely determined by $f$ up to scalar multiplication by an element in $K^\times$ and is called the \emph{bound} of $f$.

 Since we assume that $\sigma$ has finite order $n>1$, every $f\in R$ is bounded and $R$ has center $C(R) = F[t^n]\cong F[x]$, where $x=t^n$ \cite[Theorem 1.1.22]{J96}.

For any
$f \in R$ we  define the \emph{minimal central left multiple} ${\rm mclm}(f)$ of $f$ in $R$ as the unique  polynomial $h(t)$ of minimal degree in $F[t^n]$, such that $h = gf$ for some $g \in R$, and such that $h(t)=\hat{h}(t^n)$ for some monic $\hat{h} \in F[x]$. If the greatest common right divisor ${\rm gcrd}(f,t)$ of $f$ and $t$ is one, then $f^*\in C(R)$ \cite[Lemma 2.11]{GLN13}), and the minimal central left multiple $h(t)=\hat{h}(t^n)$ of $f$ equals $f^*$ (this bound $f^*$ is unique up to a scalar multiple from $K^\times$).
The assumption that ${\rm gcrd}(f,t)=1$ is trivially satisfied when $f$ is irreducible.

We denote the minimal central left multiple  of $f$ by $h(t)=\hat{h}(t^n)$ with $\hat{h}(x) \in F[x]$ monic.
If $f$ is irreducible in $R$, then $\hat{h}(x)$ is irreducible in $F[x]$.
If $\hat{h}(x)$ is irreducible in $F[x]$, then $h$ generates a maximal two-sided ideal $Rh$ in $R$ \cite[p.~16]{J96}.

 Two nonzero skew polynomials $f$ and $g$ are  \emph{similar}, written $f\sim g$, if $R/Rf\cong R/Rg$ as left $R$-modules, equivalently, there exist $u,v\in R$, such that $gcrd(f,u)=1$, $gcld(g,v)=1$ and $gu=vf$  \cite[p.~11]{J96}.  The element $u\in R$ can be chosen such that $u\in R_m$ \cite[p.~15]{J96}.

\begin{lemma}\label{le:Similar implies same mclm} (\cite[p.~9, Corollary 2]{Carcanague} and \cite[Theorem 1.2.9]{J96})
Let $f\in R$ have minimal central left multiple $h(t)=\hat{h}(t^n)$.
\\ (i) If $f$ is irreducible  then every $g\in R$ similar to $f$ has $h$ as its minimal central left multiple.
\\ (ii) Suppose that $\hat{h}\in F[x]$ is irreducible. Then $f=f_1\cdots f_r$ for irreducible $f_i\in R$ such that $f_i\sim f_j$ for all $i,j$.
\end{lemma}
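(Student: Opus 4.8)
The plan is to work with the left $R$-modules $R/Rf$ and $R/Rg$, using the two standard translations: $f\sim g$ means precisely $R/Rf\cong R/Rg$ as left $R$-modules, and, whenever ${\rm gcrd}(f,t)=1$ (in particular when $f$ is irreducible), the monic normalisation of the bound $f^*$ coincides with ${\rm mclm}(f)$. The key preliminary identity is
\[
{\rm ann}_R(R/Rf)=\{a\in R\mid aR\subseteq Rf\}=Rf^*.
\]
For the second equality I would note that $\{a\mid aR\subseteq Rf\}$ is a two-sided ideal (stable under right multiplication since $(as)R\subseteq aR\subseteq Rf$, and under left multiplication since $(sa)R=s(aR)\subseteq Rf$), that it is contained in $Rf$ (as $a\in aR$), and that it contains every two-sided ideal contained in $Rf$; by the definition of the bound it must then equal $Rf^*$.

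For (i): since $f$ is irreducible, ${\rm gcrd}(f,t)=1$, so $f^*\in C(R)$ by \cite[Lemma~2.11]{GLN13}, and after normalisation $Rh=Rf^*={\rm ann}_R(R/Rf)$. If $g\sim f$ then $R/Rg\cong R/Rf$; as $R/Rf$ is a simple $R$-module (left ideals between $Rf$ and $R$ correspond to right divisors of $f$, and an irreducible $f$ has no proper ones), $R/Rg$ is simple, and since the isomorphism is $F$-linear $g$ has the same degree as $f$; hence $g$ is irreducible with ${\rm gcrd}(g,t)=1$, so ${\rm mclm}(g)$ is the normalisation of $g^*$. Isomorphic modules have equal annihilators, so $Rg^*={\rm ann}_R(R/Rg)={\rm ann}_R(R/Rf)=Rh$, and normalising gives ${\rm mclm}(g)=h$.

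For (ii): write $f=f_1\cdots f_r$ with each $f_i\in R$ irreducible — this factorisation exists because $R$ is a domain in which $\deg$ strictly drops under any proper factorisation. Right multiplication by $f_{i+1}\cdots f_r$ is an injective left-$R$-module map carrying $R$ onto $R(f_{i+1}\cdots f_r)$ and $Rf_i$ onto $R(f_i\cdots f_r)$, hence identifying $R/Rf_i$ with the subquotient $R(f_{i+1}\cdots f_r)/R(f_i\cdots f_r)$ of $R/Rf$; so $R/Rf$ has a composition series with simple factors $R/Rf_1,\dots,R/Rf_r$. Now $h={\rm mclm}(f)$ lies in $Rf$, and being central it satisfies $hR=Rh\subseteq Rf$, so $Rh\subseteq{\rm ann}_R(R/Rf)$, and therefore $Rh\subseteq{\rm ann}_R(S)$ for every subquotient $S$ of $R/Rf$. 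Since $\hat h\in F[x]$ is irreducible, $Rh$ is a maximal two-sided ideal \cite[p.~16]{J96}, so ${\rm ann}_R(R/Rf_i)=Rh$ for all $i$. Finally $R/Rh$ is a simple algebra, and it is finite-dimensional over the field $F[x]/(\hat h)$ because $R$ is finitely generated over $C(R)=F[x]$, hence it is simple Artinian with a unique simple module $T$; consequently $R/Rf_i\cong T\cong R/Rf_j$, i.e.\ $f_i\sim f_j$, for all $i,j$.

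The bulk of the argument is routine module theory; the one genuinely substantive ingredient is the Wedderburn structure of $R/Rh$ for irreducible $\hat h$ — that it is simple Artinian with a single isomorphism class of simple module — which is precisely where irreducibility of $\hat h$ (equivalently maximality of $Rh$) is used, and which could alternatively be cited wholesale from \cite[Theorem~1.2.9]{J96}.
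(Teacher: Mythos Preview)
Your proof is correct. The paper does not supply its own proof of this lemma; it simply records the citations to \cite[p.~9, Corollary~2]{Carcanague} and \cite[Theorem~1.2.9]{J96}. Your argument --- identifying $Rf^*$ with ${\rm ann}_R(R/Rf)$ and transporting it along the module isomorphism for (i), and for (ii) building a composition series of $R/Rf$ with simple factors $R/Rf_i$ and then invoking the uniqueness of the simple module over the simple Artinian ring $R/Rh$ --- is exactly the standard module-theoretic route found in Jacobson, so you have effectively reconstructed the cited proof rather than deviated from it. One cosmetic point: the claim that ${\rm gcrd}(g,t)=1$ follows from $g$ irreducible tacitly excludes the degenerate case $g$ associate to $t$; this is harmless here (if $f\sim t$ then both have ${\rm mclm}=t^n$ directly), but worth a word if you want the statement to be literally exhaustive.
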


The quotient algebra $R/Rh$ has the commutative $F$-algebra $C(R/Rh) \cong F[x]/ (\hat{h}(x))$ of dimension $deg(\hat{h})$ over $F$ as its center, cf. \cite[Lemma 4.2]{GLN13}.
Define $E_{\hat{h}}=F[x]/(\hat{h}(x))$. If $\hat{h}$ is irreducible in $F[x]$, then $E_{\hat{h}}$ is a field extension of $F$ of degree ${\rm deg}(\hat{h})$.

 \cite[Lemma 3]{LS} generalizes as follows for any base field $F$:

\begin{theorem}\label{thm:main2}  \cite{TP21}
Suppose that $f$ is irreducible with minimal central left multiple $h$. Let $k$ be the number of irreducible factors of $h$ in $R$.
 \\ (i) The right nucleus ${\rm Nuc}_r(R/Rf)$ is a central division algebra over $E_{\hat{h}}$ of degree $s=n/k$, and
 $$ R/Rh \cong M_k({\rm Nuc}_r(R/Rf)).$$
 In particular, this means that ${\rm deg}(\hat{h})=\frac{m}{s}$, ${\rm deg}(h)=\frac{nm}{s}$, and
 $[{\rm Nuc}_r(R/Rf) :F]=ms.$
 \\
 (ii) If $n$ is prime and $f$ not right invariant, then ${\rm Nuc}_r(R/Rf)\cong E_{\hat{h}}.$
In particular, then $[{\rm Nuc}_r(R/Rf) :F]=m$, ${\rm deg}(\hat{h})=m$, and ${\rm deg}(h)=mn$ is maximal.
\end{theorem}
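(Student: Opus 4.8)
The plan is to study the associative quotient $A:=R/Rh$ through Artin–Wedderburn and then to match the division ring occurring there with $\mathrm{Nuc}_r(R/Rf)$. Since $f$ is irreducible we have $\mathrm{gcrd}(f,t)=1$, so the bound of $f$ is $h=\hat{h}(t^n)$ with $\hat{h}\in F[x]$ irreducible; hence, as recalled above, $Rh$ is a maximal two-sided ideal of $R$ and $A=R/Rh$ is a simple ring. As $A$ is finite-dimensional over $F$ (of $F$-dimension $n\deg h=n^2\deg\hat{h}$, using $h=\hat{h}(t^n)$), it is simple Artinian, so by Artin–Wedderburn $A\cong M_k(D)$ for a division ring $D$ with center $Z(D)=Z(A)\cong E_{\hat{h}}$ (using $C(R/Rh)\cong F[x]/(\hat{h}(x))$, cf.\ \cite{GLN13}); since $\hat{h}$ is irreducible, $E_{\hat{h}}$ is a field. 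Let $s\geq 1$ be defined by $[D:E_{\hat{h}}]=s^{2}$.

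Next I would identify $D$ with $\mathrm{Nuc}_r(R/Rf)$. Since $h\in Rf$ we have $Rh\subseteq Rf$, and as $Rh$ is two-sided it annihilates the left $R$-module $R/Rf$, which is therefore a left $A$-module; it is simple (as $A$-module, equivalently as $R$-module) because $f$ is irreducible, i.e.\ $Rf$ is a maximal left ideal. As $A\cong M_k(D)$ has, up to isomorphism, a unique simple left module $S$ with $A\cong S^{\oplus k}$, we get $R/Rf\cong S$ and $A\cong(R/Rf)^{\oplus k}$ as left $R$-modules. Applying $\mathrm{End}_R(-)$ and using $\mathrm{End}_R(R/Rh)=\mathrm{End}_A(A)\cong A^{\mathrm{op}}$ gives $A^{\mathrm{op}}\cong M_k\!\big(\mathrm{End}_R(R/Rf)\big)$. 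Finally, an $R$-endomorphism of $R/Rf$ is determined by the image $\bar g$ of $\bar 1$, and this assignment is legitimate exactly when $g$ lies in the eigenring $\{g\in R:\deg g<m,\ fg\in Rf\}=\mathrm{Nuc}_r(R/Rf)$, with composition corresponding to the multiplication of $R/Rf$ taken in the opposite order; hence $\mathrm{End}_R(R/Rf)\cong\mathrm{Nuc}_r(R/Rf)^{\mathrm{op}}$. Substituting and using $M_k(B^{\mathrm{op}})\cong M_k(B)^{\mathrm{op}}$ (transposition), $A^{\mathrm{op}}\cong M_k\!\big(\mathrm{Nuc}_r(R/Rf)\big)^{\mathrm{op}}$, so $R/Rh=A\cong M_k\!\big(\mathrm{Nuc}_r(R/Rf)\big)$ and, by uniqueness in Wedderburn, $\mathrm{Nuc}_r(R/Rf)\cong D$ is a central division algebra over $E_{\hat{h}}$. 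Getting this identification exactly right — landing on $M_k(\mathrm{Nuc}_r(R/Rf))$ rather than on $M_k(\mathrm{Nuc}_r(R/Rf)^{\mathrm{op}})$ — is the main delicate point, since it depends on the two order-reversals ($\mathrm{End}_A(A)\cong A^{\mathrm{op}}$ and the eigenring anti-isomorphism) cancelling, together with the standard but easily mis-stated module theory of $R$ (one-sided ideals principal, $Rf$ maximal iff $f$ irreducible), which I would invoke with explicit references.

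The numerical assertions then drop out. Comparing $F$-dimensions in $A\cong M_k(D)$ gives $n^2\deg\hat{h}=k^2s^2\deg\hat{h}$, so $n=ks$ and $s=n/k$; comparing them in $A\cong(R/Rf)^{\oplus k}$ gives $n\deg h=k\cdot nm$, so $\deg h=km$, whence $\deg\hat{h}=\deg h/n=m/s$ and $\deg h=nm/s$, and $[\mathrm{Nuc}_r(R/Rf):F]=[D:E_{\hat{h}}]\,[E_{\hat{h}}:F]=s^{2}(m/s)=ms$. It remains to see that this $k$ is the number of irreducible factors of $h$: any factorization $h=g_1\cdots g_\ell$ into irreducibles of $R$ yields the chain $Rh\subseteq R(g_2\cdots g_\ell)\subseteq\cdots\subseteq Rg_\ell\subseteq R$, whose successive quotients are the simple modules $R/Rg_j$ (here one uses right cancellation in the domain $R$), i.e.\ a composition series of $R/Rh$ of length $\ell$; by Jordan–Hölder $\ell$ equals the composition length of $A\cong S^{\oplus k}$, namely $k$. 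This proves (i).

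For (ii), assume $n$ is prime and $f$ is not right invariant. From $ks=n$ with $k,s\geq 1$ we get $k=1$ or $k=n$. If $k=1$, then $h$ is itself irreducible, and $h=gf$ forces $\deg g=\deg h-m=0$, so $g\in K^\times$ and $Rh=Rf$; but $Rh$ is two-sided, which would make $f$ right invariant — a contradiction. Hence $k=n$ and $s=1$, so $\mathrm{Nuc}_r(R/Rf)$ is a central division algebra of degree $1$ over the field $E_{\hat{h}}$, i.e.\ $\mathrm{Nuc}_r(R/Rf)\cong E_{\hat{h}}$; then $[\mathrm{Nuc}_r(R/Rf):F]=m$, $\deg\hat{h}=m$, and $\deg h=nm$, which is the largest possible value since in general $\deg h=km\leq nm$.
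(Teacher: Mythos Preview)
The paper does not supply a proof of this theorem: it is quoted verbatim from \cite{TP21} (Thompson--Pumpl\"un), so there is no in-paper argument to compare against. Your proposal is a correct, self-contained proof via the standard route---Artin--Wedderburn for the simple Artinian ring $R/Rh$, identification of the Wedderburn division ring with the eigenring $\mathrm{Nuc}_r(R/Rf)\cong\mathrm{End}_R(R/Rf)^{\mathrm{op}}$, then dimension counts and Jordan--H\"older for the factor count $k$---which is essentially the approach of \cite{TP21} (and, over finite fields, of \cite{LS}). Your handling of the two order-reversals and of part~(ii) is accurate.
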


Moreover, if ${\rm deg}(h)=mn$ is maximal (which is always the case when $F$ is a finite field but does not hold in general for any base field) and  $\hat{h}$ is irreducible in $F[x]$, then $f$ is irreducible  \cite[Proposition 4.1]{GLN13}.

 Let $f\in R$ be a monic polynomial of degree $m>1$ and let $h(t)=\hat{h}(t^n)$ be its minimal central left multiple. Set $B={\rm Nuc}_r(R/Rf)$,
 then ${\rm deg}(h)=km$ and $R/Rh\cong M_{k}(B)$ as $E_{\hat{h}}$-algebras by Theorem \ref{thm:main2}. Let
 $\Psi:R/Rh\to M_k(B),$ $\Psi(a+Rh)=M_a$ be this $E_{\hat{h}}$-algebra isomorphism. For each $M_a\in M_k(B)$, we have the endomorphism
 $L_{M_a}:M_k(B)\to M_k(B)$, $L_{M_a}:X\mapsto M_aX.$

\begin{theorem}  \label{thm: rank of a polynomial km} (\cite[Theorem 6]{TP23})
Suppose that ${\rm deg}(h)=km$, then for all nonzero $a+Rh\in R/Rh$ we have
 $$  {\rm colrank}(M_a)=k-\frac{1}{m}{\rm deg}({\rm gcrd}(a,h)).$$
 In particular, if ${\rm deg}(h)=mn$  then
 $M_a\in M_n(E_{\hat{h}})$
 and
$${\rm rank}(M_a)= n - \frac{1}{m}{\rm deg}({\rm gcrd}(a,h)).$$
 \end{theorem}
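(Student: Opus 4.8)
The plan is to reduce the rank computation for the multiplication operator $L_{M_a}$ on $M_k(B)$ to a statement about left $R$-modules, exploiting the isomorphism $\Psi: R/Rh \xrightarrow{\sim} M_k(B)$ from Theorem \ref{thm:main2}. The key observation is that $\mathrm{colrank}(M_a)$ — the rank of the $E_{\hat h}$-linear map $X \mapsto M_a X$ on $M_k(B)$, divided by an appropriate normalisation — is an intrinsic invariant of the left ideal $M_k(B) M_a$, equivalently of the left $R$-submodule $\Psi^{-1}(M_k(B)M_a) = (Ra + Rh)/Rh \subseteq R/Rh$. So first I would identify $\mathrm{colrank}(M_a)$ with $\dim_B(M_k(B)M_a)/\dim_B(B^k)$, i.e. with the $B$-dimension of the image column space; since $M_k(B)$ acting on itself by left multiplication is a direct sum of $k$ copies of the simple module $B^k$, the left ideal $M_k(B)M_a$ is a sum of copies of $B^k$, and $\mathrm{colrank}(M_a)$ counts how many — more precisely $\dim_{E_{\hat h}}(M_k(B)M_a) = k \cdot \mathrm{colrank}(M_a) \cdot \dim_{E_{\hat h}}(B)$.

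Next I would transport this through $\Psi$. The submodule $R(a+Rh) \subseteq R/Rh$ has a dimension over $F$ computable by the right division algorithm: $R(a+Rh)$ is spanned by $\{a, ta, t^2a, \dots\}$ taken mod $h$, and its $K$-dimension equals $\deg(h) - \deg(\mathrm{gcrd}(a,h))$, because $Ra + Rh = R\,\mathrm{gcrd}(a,h)$ and $R\,\mathrm{gcrd}(a,h)/Rh$ has $K$-dimension $\deg(h) - \deg(\mathrm{gcrd}(a,h))$. (Here I use that $Ra + Rh = R d$ with $d = \mathrm{gcrd}(a,h)$, the standard Euclidean-domain-style fact for $R = K[t;\sigma]$, and that $d \mid h$ so $Rd \supseteq Rh$.) Comparing the two dimension counts — on the $M_k(B)$ side, $\dim_F(M_k(B)M_a) = k\cdot\mathrm{colrank}(M_a)\cdot[B:F]$ and $[B:F] = ms$ with $ms = nm/k \cdot (k/k)\dots$ wait, from Theorem \ref{thm:main2}, $[B:F] = ms$ and $\deg(\hat h) = m/s$, so $\deg(h) = n\deg(\hat h) = nm/s = km$; on the $R/Rh$ side, $\dim_F(R(a+Rh)) = n(\deg h - \deg\mathrm{gcrd}(a,h)) / \text{(something)}$ — I would set up the bookkeeping carefully: $\dim_K = \deg h - \deg\mathrm{gcrd}(a,h)$, hence $\dim_F = n(\deg h - \deg\,\mathrm{gcrd}(a,h)) $. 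Equating $k\cdot \mathrm{colrank}(M_a)\cdot ms = n(km - \deg\,\mathrm{gcrd}(a,h))$ and using $ms = nm/k \cdot k / \dots$; concretely $ms\cdot k = n m$ so the left side is $\mathrm{colrank}(M_a)\cdot nm$, giving $\mathrm{colrank}(M_a) = k - \frac{1}{m}\deg\,\mathrm{gcrd}(a,h)$, as claimed.

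For the second assertion, when $\deg h = mn$ we have $k = n$ (since $\deg h = km$), and then $s = n/k = 1$, so $B = \mathrm{Nuc}_r(R/Rf)$ is central of degree $1$ over $E_{\hat h}$, i.e. $B = E_{\hat h}$; thus $M_k(B) = M_n(E_{\hat h})$, $M_a \in M_n(E_{\hat h})$, and $\mathrm{colrank}$ coincides with the usual matrix rank over the field $E_{\hat h}$, yielding $\mathrm{rank}(M_a) = n - \frac{1}{m}\deg\,\mathrm{gcrd}(a,h)$. The main obstacle I anticipate is the first identification: pinning down precisely that the column rank of left multiplication by $M_a$ on $M_k(B)$ equals the number of simple summands in the left ideal $M_k(B)M_a$ — this requires that $B$ be a division ring (true here, as $\mathrm{Nuc}_r$ of a division algebra, cf. the preliminaries) so that $M_k(B)$ is simple Artinian and its finitely generated modules are semisimple with a well-defined length, and that $\Psi$ being an $E_{\hat h}$-algebra isomorphism carries left $R/Rh$-submodules to left $M_k(B)$-submodules compatibly with $F$-dimension. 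Once that structural dictionary is in place, the rest is the dimension arithmetic above, all of whose ingredients ($Ra+Rh = R\,\mathrm{gcrd}(a,h)$, the degree formula for quotients, and the numerology $km = \deg h$, $ms = nm/k$) are either standard for skew polynomial rings or already recorded in Theorem \ref{thm:main2}.
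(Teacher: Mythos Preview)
The paper does not actually prove this theorem: it is quoted verbatim from \cite[Theorem 6]{TP23} and immediately used, with only the remark that the finite-field case is \cite[Proposition 7]{LS}. So there is no ``paper's own proof'' to compare against here.

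That said, your argument is correct and is essentially the standard route (and, as far as one can infer, the route taken in \cite{TP23}). The two ingredients are exactly the ones you isolate: (1) for a division ring $B$, a left ideal $M_k(B)M_a$ has $B$-dimension $k\cdot\mathrm{colrank}(M_a)$, hence $F$-dimension $k\cdot\mathrm{colrank}(M_a)\cdot[B:F]=\mathrm{colrank}(M_a)\cdot kms$; and (2) under $\Psi$ this ideal corresponds to $(Ra+Rh)/Rh=R\,\mathrm{gcrd}(a,h)/Rh$, whose $K$-dimension is $\deg h-\deg\mathrm{gcrd}(a,h)=km-\deg\mathrm{gcrd}(a,h)$, hence $F$-dimension $n(km-\deg\mathrm{gcrd}(a,h))$. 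Equating and using $kms=nm$ (equivalently $s=n/k$) gives the formula. Your derivation wobbles a bit in the bookkeeping (``wait, \dots'') but lands on the right identity; in a clean write-up you should simply record $[B:F]=ms$ and $ks=n$ from Theorem~\ref{thm:main2} up front and the arithmetic becomes a one-liner. The deduction of the second assertion from $k=n$, $s=1$, $B=E_{\hat h}$ is correct as stated.
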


For finite fields and thus ${\rm deg}(h)=nm$ maximal, this is \cite[Proposition 7]{LS}.

 We now use Theorem \ref{thm: rank of a polynomial km} to  generalize \cite[Theorem 10]{LS} which was proved for finite fields $K$:

\begin{theorem}\label{thm:irred}
\label{thm:new thm 10}
Let $f,g\in R$ be irreducible of degree $m$.
Then $f$ and $g$ are similar if and only if their bounds $f^*$ and $g^*$ are the same.
 \end{theorem}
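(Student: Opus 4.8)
The plan is to establish both implications, exploiting the relationship between the bound $f^*$ (which, since $f$ is irreducible, coincides with its minimal central left multiple $h$) and the module-theoretic notion of similarity. For the forward direction, suppose $f$ and $g$ are irreducible and similar. Then by Lemma \ref{le:Similar implies same mclm}(i), $f$ and $g$ have the same minimal central left multiple $h(t)=\hat{h}(t^n)$. Since $f$ and $g$ are irreducible, $\mathrm{gcrd}(f,t)=\mathrm{gcrd}(g,t)=1$, so by \cite[Lemma 2.11]{GLN13} the bounds $f^*$ and $g^*$ both equal $h$ (up to the inevitable $K^\times$-scalar). Hence $f^*=g^*$. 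This direction is essentially a citation-chase and should be short.

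The substantive direction is the converse: assuming $f^*=g^*=h$, I want to conclude $f\sim g$, i.e. $R/Rf\cong R/Rg$ as left $R$-modules. Here is where Theorem \ref{thm: rank of a polynomial km} enters. Both $f$ and $g$ divide $h$ on the right (since $h=g_1 f=g_2 g$), so $Rf$ and $Rg$ both contain $Rh$, and we may view $R/Rf$ and $R/Rg$ as quotients of $R/Rh$. Write $\hat h$ for the monic polynomial with $h(t)=\hat h(t^n)$; by Lemma \ref{le:Similar implies same mclm}, since $f$ is irreducible, $\hat h$ is irreducible in $F[x]$, and by Theorem \ref{thm:main2} we have $R/Rh\cong M_k(B)$ where $B=\mathrm{Nuc}_r(R/Rf)$ is a central division algebra over $E_{\hat h}$, with $k$ the number of irreducible factors of $h$ in $R$. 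The key point is that $R/Rf$, as a left module over the simple artinian ring $R/Rh\cong M_k(B)$, is a simple module — because $f$ is irreducible, $Rf$ is a maximal left ideal containing $Rh$ — and up to isomorphism there is a \emph{unique} simple left $M_k(B)$-module (namely the column space $B^k$). The same argument applied to $g$ (whose right nucleus, by Lemma \ref{le:Similar implies same mclm}(ii) combined with $f^*=g^*$, is forced to have the same degree data) shows $R/Rg$ is also a simple left $R/Rh$-module. Two simple modules over $M_k(B)$ being isomorphic, we get $R/Rf\cong R/Rg$ as left $R/Rh$-modules, hence as left $R$-modules, so $f\sim g$.

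An alternative, more computational route for the converse uses Theorem \ref{thm: rank of a polynomial km} directly: for $a+Rh\in R/Rh$, the colrank of $M_a$ detects $\deg(\mathrm{gcrd}(a,h))$, and since $f$ is (up to scalar) the unique monic right divisor of $h$ of degree $m$ lying in a fixed similarity class, one identifies $Rf/Rh$ inside $R/Rh\cong M_k(B)$ as a specific minimal left ideal; the equality $f^*=g^*$ then forces $Rf/Rh$ and $Rg/Rh$ to be conjugate minimal left ideals, giving the module isomorphism. I would present the clean semisimple-module argument as the main proof and perhaps remark on the rank-function viewpoint.

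The main obstacle I anticipate is the bookkeeping needed to guarantee that $g$ has the \emph{same} invariants $k$ and $m$ as $f$ — i.e. that $\deg(g)=m$ and that $h$ has the same factorization length when viewed relative to $g$ — so that $R/Rg$ is genuinely a simple $R/Rh$-module of the correct type rather than a sum of several copies or a module over a different matrix ring. This is handled because $\hat h$ is irreducible (so $Rh$ is a maximal two-sided ideal and $R/Rh$ is simple artinian with a unique simple module regardless of which $f$ or $g$ we started from), together with the degree constraint $\deg(g)=m=\deg(f)$ in the hypothesis and Theorem \ref{thm:main2}(i), which pins down $[B:F]=ms$ and $k=n/s$ intrinsically from $h$. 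Once that is in place, the uniqueness of the simple module over $M_k(B)$ closes the argument.
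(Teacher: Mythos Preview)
Your proof is correct, but your main argument takes a genuinely different route from the paper's.

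For the converse direction, the paper proceeds computationally via Theorem~\ref{thm: rank of a polynomial km}: using the isomorphism $\Psi\colon R/Rh\to M_k(B)$, it computes
\[
\mathrm{colrank}(M_f)=\mathrm{colrank}(M_g)=k-\tfrac{1}{m}\deg(\mathrm{gcrd}(f,h))=k-1,
\]
and then invokes the elementary linear-algebra fact that two matrices of the same column rank over a division ring $B$ are equivalent: there exist invertible $M_u,M_v\in M_k(B)$ with $M_uM_f=M_gM_v$. Pulling this back through $\Psi$ gives $uf\equiv gv\pmod{h}$, and an explicit reduction (writing $h=sf$ and $v=v'+cf$ with $\deg v'<m$) produces $gv'=u'f$ with $\mathrm{gcrd}(f,v')=1$, establishing $f\sim g$ directly from the definition of similarity.

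Your argument bypasses the rank computation entirely: once $R/Rh\cong M_k(B)$ with $B$ a division ring, the ring $R/Rh$ is simple artinian with a unique simple left module up to isomorphism; since $Rf$ and $Rg$ are maximal left ideals containing $Rh$, both $R/Rf$ and $R/Rg$ are simple $R/Rh$-modules and hence isomorphic as left $R$-modules. This is cleaner and more conceptual, and makes transparent \emph{why} the result holds (it is the Artin--Wedderburn uniqueness of the simple module). The paper's approach, by contrast, is more constructive: it actually exhibits the elements $u',v'$ realizing the similarity, which is closer in spirit to the finite-field argument of \cite[Theorem~10]{LS} that it is generalizing. Your ``alternative computational route'' is essentially the paper's proof, stated less precisely. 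Either way, your handling of the bookkeeping obstacle is correct: the invariants $k$ and $B$ depend only on $h$, not on the choice of irreducible factor, so no circularity arises.
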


 \begin{proof}
 We know that for irreducible $f$ and $g$ the minimal central left multiples are bounds of $f$ and $g$.
  Because of Lemma \ref{le:Similar implies same mclm}, it thus remains to show that if $f$ and $g$ have the same minimal central left multiple   then they are similar.

 Let $f\in R$ be monic and irreducible of degree $m$, and let $h(t)=\hat{h}(t^n)$ be the minimal central left multiple of $f$. We can write $h=sf$ for a suitable $s\in R$. Then $R/Rh\cong M_{k}(B)$ as $E_{\hat{h}}$-algebras by Theorem \ref{thm:main2}, where $B={\rm Nuc}_r(S_f)$ is a central simple division algebra over $E_{\hat{h}}$ and
$E_{\hat{h}}$ is a field extension of $F$ of degree ${\rm deg}(\hat{h})$. This yields
the $E_{\hat{h}}$-algebra isomorphism $\Psi:R/Rh\to M_k(B),$ $\Psi(a+Rh)=M_a$. In particular, this implies
$$  {\rm colrank}(M_f)= {\rm colrank}(M_g)=k-\frac{1}{m}{\rm deg}({\rm gcrd}(f,h))=k-\frac{1}{m}{\rm deg}({\rm gcrd}(g,h))=k-1$$
 by Theorem \ref{thm: rank of a polynomial km},
 since $f$ and $g$ both have minimal central left multiple $h$.
 The rest of the proof is now  analogous to the one given for \cite[Theorem 10]{LS}, since $B$ is a division algebra (and for this we need the assumption that $f$ and $g$ are irreducible).
 The equality of the column ranks shows that there exist invertible
 matrices $M_u, M_v\in M_k(B)$
 such that $M_u M_f=M_gM_v$, that means $uf=gv\, {\rm mod} \, h$. This implies that there is $b\in R$, such that
 $$gv=uf+bh=uf+baf=(u+ba)f.$$
  Write $v=v'+cf$ for some nonzero $v'\in R$ of degree less than $m$. Then we obtain
 $g(v'+cf)=(u+ba)f$ hence $gv'=(u+ba-gc)f$ and we can conclude that $gv'=u'v$ for $u'=u+ba-gc$. This means $g\sim f$.
\end{proof}

 Thus the  different isotopy classes depend on the different irreducible polynomials $\hat{h}\in F[x]$ of degree $m_0=mk/n$ 
 (with $x=t^n$), where $k$ is the number of  irreducible factors of $\hat{h}_1(t^n)\in R$.

  The above result also generalizes \cite[Proposition 2.1.17]{CaB} which was proved for finite fields $K$, that means in the case where the degree of $f^*$ is maximal (i.e., $deg(f^*)=mn$): if $f, g\in R$ are irreducible, then the two monic polynomials $f$ and $g$ are similar if and only if $N(f)$ equals $N(g)$ up to an invertible constant, where $N$ is the reduced norm of the cyclic algebra  $(K(x),\widetilde{\sigma},x)$ over $F(x)$  \cite[Proposition 1.4.6]{J96}. Indeed, when $deg(f^*)=mn$ then $B={\rm Nuc}_r(R/Rf)$ is simply a field extension of $F$ and the proof of \cite[Theorem 10]{LS} holds verbatim; in that case we also have that $N(f)$  is the bound of $f$.
 .

\begin{corollary}
 Let $f,g\in R$ be monic and irreducible of degree $m$. 
 If the bounds
 of $f$ and $g$ are not the same, then the two division rings  $R/Rf$ and $R/Rg$ are not isotopic.
 \end{corollary}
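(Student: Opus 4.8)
The plan is to prove the contrapositive of the statement, which is precisely the content of Theorem~\ref{thm:irred} combined with the isotopy result Theorem~\ref{thm:easy}. Suppose for contradiction that $R/Rf$ and $R/Rg$ are isotopic. The key observation is that isotopy is a structure-preserving equivalence that must be detected by some invariant of the algebras which itself only depends on the similarity class of the defining skew polynomial. First I would recall that for $f$ irreducible of degree $m$, the minimal central left multiple $h(t)=\hat{h}(t^n)$ coincides with the bound $f^*$ (up to a scalar in $K^\times$), so the hypothesis that the bounds differ says exactly that $f$ and $g$ do not have the same minimal central left multiple. By the forward direction of Theorem~\ref{thm:irred} (or directly by Lemma~\ref{le:Similar implies same mclm}(i)), this forces $f$ and $g$ to be non-similar.

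The heart of the argument is then to show that non-similar irreducible $f,g$ cannot yield isotopic Petit division algebras. The cleanest route is to invoke the converse direction already established: Theorem~\ref{thm:easy} asserts that similar irreducible polynomials give isotopic algebras, but we need the contrapositive flavour, i.e. that an isotopy \emph{implies} similarity. Here I would use the classification of isotopies via the nucleus structure. Recall from Section~\ref{subsec:structure} that for a proper Petit division algebra $A=R/Rf$ one has $\mathrm{Nuc}_l(A)=\mathrm{Nuc}_m(A)=K$ and $\mathrm{Nuc}_r(A)=B$, a central division algebra over $E_{\hat{h}}=F[x]/(\hat{h}(x))$ by Theorem~\ref{thm:main2}. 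Under any isotopy $h(xy)=f(x)g(y)$ the left, middle and right nuclei are carried to the corresponding nuclei of the isotopic algebra (this is a standard fact about isotopy of division rings, traceable to Albert). Consequently, if $R/Rf$ and $R/Rg$ are isotopic then $\mathrm{Nuc}_r(R/Rf)\cong \mathrm{Nuc}_r(R/Rg)$ as rings, and in particular their centers $E_{\hat{h}_1}$ and $E_{\hat{h}_2}$ are isomorphic as $F$-algebras.

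This isomorphism of centers need not immediately give $\hat{h}_1=\hat{h}_2$ — two non-associate irreducible polynomials in $F[x]$ can define isomorphic field extensions — so the isotopy argument alone is not quite enough, and this is where the main obstacle lies. The resolution is to strengthen the invariant: one should track not just the abstract isomorphism type of $\mathrm{Nuc}_r$ but the way $\mathrm{Nuc}_r$ sits inside $R/Rf$ together with the $K$-module structure, equivalently the left multiplication maps $L_a$ on $R/Rf$ and their ranks, which via Theorem~\ref{thm: rank of a polynomial km} are governed by $\mathrm{gcrd}(a,h)$. Since an isotopy intertwines the left-multiplication maps of $A$ and $A'$ up to composition with fixed bijections, the associated rank-metric codes $\{R_a\}$ are equivalent (as noted in the introduction), and equivalent MRD codes have the same rank distribution; this rank distribution determines the degree sequence of $\hat{h}$, and combined with the identification $h=f^*$ one recovers the bound. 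Thus an isotopy forces $f^*$ and $g^*$ to agree up to a $K^\times$-scalar, contradicting the hypothesis. I would present the proof crisply: assume isotopy, deduce $f\sim g$ by the nucleus/rank-code argument above, then apply Theorem~\ref{thm:irred} to conclude $f^*=g^*$, contradiction; hence $R/Rf$ and $R/Rg$ are not isotopic. The step needing most care is verifying that the rank distribution of the code $\{R_a\}$ — equivalently the multiset of degrees of $\mathrm{gcrd}(a,h)$ — is an isotopy invariant that pins down $h$ up to scalar, rather than merely pinning down its degree.
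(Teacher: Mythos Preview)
Your proposal correctly identifies that the hard direction is ``isotopic $\Rightarrow$ same bound'' and that Theorems~\ref{thm:irred} and~\ref{thm:easy} only give the converse (same bound $\Rightarrow$ similar $\Rightarrow$ isotopic). You are also right to flag the final step as the delicate one. Unfortunately that step genuinely fails, and in fact the corollary as stated is contradicted by Theorem~\ref{thm:11} later in the same paper. There it is shown that whenever $\hat{h}_1$ and $\hat{h}_2$ lie in the same $G$-orbit of $I(F,m_0)$, the algebras $R/Rf$ and $R/Rg$ are isotopic; but two \emph{distinct} monic irreducibles in $F[x]$ can lie in one $G$-orbit (for example, with $F=\mathbb{F}_4$, $K=\mathbb{F}_{16}$ and $\tau$ the Frobenius, $\hat{h}=x^2+x+\omega$ is carried to $\hat{h}^{(1,\tau)}=x^2+x+\omega^2$). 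In that situation the bounds $f^*=\hat{h}_1(t^n)$ and $g^*=\hat{h}_2(t^n)$ are different, yet $R/Rf$ and $R/Rg$ are isotopic. Consequently no isotopy invariant --- nucleus, rank distribution of the associated MRD code, or anything else --- can separate them, so your proposed rank-distribution argument cannot pin down $h$.

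The paper itself gives no proof of this corollary; it is placed immediately after Theorem~\ref{thm:irred}, which only asserts that similarity of $f$ and $g$ is equivalent to equality of bounds. The most plausible reading is that the intended conclusion is ``not similar'' (equivalently, $R/Rf\not\cong R/Rg$ as left $R$-modules) rather than ``not isotopic'', in which case the corollary is simply the contrapositive of one direction of Theorem~\ref{thm:irred} and needs no further argument. Your observation that isotopy forces $E_{\hat{h}_1}\cong E_{\hat{h}_2}$ is correct, and your own caveat that this does not force $\hat{h}_1=\hat{h}_2$ is exactly the obstruction; the gap cannot be closed because the statement, read literally, is false.
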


We now generalize \cite[Theorem 8]{LS} to arbitrary fields $K$.

\begin{theorem} \label{thm:easy}
Let $f,g\in R$ be two similar monic polynomials.
 If $f$ and $g$ are irreducible then the nonassociative division rings  $R/Rf$ and $R/Rg$ are isotopic.
\end{theorem}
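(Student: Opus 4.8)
The plan is to produce explicit isotopy maps between $R/Rf$ and $R/Rg$ directly from the similarity $f\sim g$, exploiting the matrix description of $R/Rh$ from Theorem \ref{thm:main2}. Since $f$ and $g$ are irreducible and similar, they have the same minimal central left multiple $h(t)=\hat h(t^n)$ by Lemma \ref{le:Similar implies same mclm}; write $h=s_ff=s_gg$. By Theorem \ref{thm:main2} we have an $E_{\hat h}$-algebra isomorphism $\Psi:R/Rh\to M_k(B)$ with $B=\mathrm{Nuc}_r(R/Rf)$ a central division algebra over $E_{\hat h}$, and by the proof of Theorem \ref{thm:irred} there are invertible $M_u,M_v\in M_k(B)$ with $M_uM_f=M_gM_v$, equivalently (lifting to $R$) there exist $u,v\in R$ with $\mathrm{gcrd}(f,u)=1$, $\mathrm{gcld}(g,v)=1$, and $gv=uf$ in $R$. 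One checks $u$ may be taken in $R_m$, and right multiplication by $u$ gives a bijection $\rho_u\colon R/Rf\to R/Rg$, $a+Rf\mapsto au+Rg$, because $\mathrm{gcrd}(f,u)=1$ forces $Ru+Rf=R$, so $au\in Rg$ iff $a\in Rf$; similarly $v$ (or rather a companion element) yields a bijection the other way.

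The key computation is to show that these module maps intertwine the ring multiplications up to a further linear correction. Working inside $R/Rh$, for $a,b\in R_m$ one has $a\circ_f b = ab\ \mathrm{mod}_r f$, and I would compute $(a\circ_f b)u$ modulo $g$ using $gv=uf$: roughly, $ab = (a\circ_f b) + qf$ for some $q\in R$, so $abu = (a\circ_f b)u + qfu$, and the relation $fu \equiv$ (something in $Rg$ coming from $uf=gv$) lets one push the correction into $Rg$. This is exactly the kind of bookkeeping that makes $h(xy)=f(x)g(y)$ hold for suitable additive, subfield-linear maps $f,g,h$ in the isotopy definition; the maps will be of the form $x\mapsto xu$, $y\mapsto$ (a second right-multiplication or the identity), and $h$ a linear rearrangement, each $F$-linear. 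An alternative, cleaner route is to transport everything through $\Psi$: $R/Rf$ and $R/Rg$ sit inside $M_k(B)$ as the images of principal left ideals generated by $M_f$ and $M_g$, and the matrix identity $M_uM_f=M_gM_v$ exhibits these two $B$-modules as isomorphic with the isomorphism implemented by left/right matrix multiplication, which is visibly an isotopy of the induced algebra structures on the quotient spaces.

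I expect the main obstacle to be verifying that the correction term in $(a\circ_f b)u \equiv (\text{product in } R/Rg)$ is genuinely \emph{linear} in the right variable after the twist by $u$ — i.e. identifying the precise third map $h$ in the isotopy triple and checking it is additive and linear over a subfield, rather than merely a set bijection. Concretely, from $gv=uf$ one gets $fu^{-1} = v^{-1}g$ only formally (these are not in $R$), so one must instead track the polynomial identity $gv = uf$ together with the degree bookkeeping $v=v'+cf$ used in the proof of Theorem \ref{thm:irred} to keep all intermediate elements in $R_m$; ensuring the resulting maps on the $m$-dimensional space $R_m$ are well defined and $F$-linear is where the care is needed. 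Once that is in place, $f=g=h$ is not claimed (only isotopy), so no further rigidity argument is required, and the theorem follows. Finally, I would remark that irreducibility of $f$ and $g$ enters twice: to guarantee the common minimal central left multiple via Lemma \ref{le:Similar implies same mclm}, and to ensure $R/Rf$, $R/Rg$ are division rings so that the notion of isotopy in the stated sense applies.
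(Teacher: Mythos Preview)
Your core idea---right multiplication by the similarity element---is exactly what the paper uses, but you are taking an unnecessary detour through Theorem~\ref{thm:main2}, the matrix algebra $M_k(B)$, and Lemma~\ref{le:Similar implies same mclm}. The paper's argument is entirely elementary and never leaves $R$: from $gu=vf$ with $u\in R_m$ (so $gu\equiv 0\bmod_r f$), it defines $H\colon R/Rg\to R/Rf$ by $H(w)=w\circ_f u$ and checks the single identity $H(w\circ_g z)=w\circ_f H(z)$. That already exhibits the isotopy triple $(\mathrm{id}_{R_m},H,H)$; no matrix description, no minimal central left multiple, and no ``third map'' beyond $H$ itself are required.

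Your ``main obstacle'' dissolves once things are set up this way. Writing $wz=(w\circ_g z)+qg$ gives $(w\circ_g z)u = wzu - q(gu) = wzu - qvf$, hence $H(w\circ_g z)=wzu\bmod_r f$. On the other side, $zu=(z\circ_f u)+pf$ gives $w(z\circ_f u)=wzu-wpf$, whence $w\circ_f H(z)=wzu\bmod_r f$ as well. Bijectivity of $H$ follows because $R/Rf$ is a finite-dimensional $F$-division algebra and $u\neq 0$ in $R_m$; this is the only place irreducibility is used. Note also a labeling slip in your sketch: with your convention $gv=uf$, the element that makes the map work is $v$ (since then $gv\equiv 0\bmod_r f$), not $u$; your $\rho_u\colon a+Rf\mapsto au+Rg$ is not visibly well-defined from $gv=uf$, as that relation says nothing about $fu\bmod_r g$.
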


\begin{proof}
 Since $f$ and $g$ are similar, there is $u\in R_m$ such that $gu=0 \,{\rm mod}_r\,f$. Define
 $H:R/Rg \rightarrow R/Rf$, $H(w)=w\circ_f u$. Then  $H$ is an additive map and as in the proof of \cite[Theorem 8]{LS}, $H(w\circ_g z)=w \circ_f H(z)$ for all $w,z\in R/Rg$. For all $a\in F$ we have $H(a \circ_g w)=(a\circ_f w)\circ_f u=a\circ_f (w\circ_f  u)=a\circ_f  H(w)$
 since $R/Rf$ has center $F$, hence $H$ is an $F$-linear map.
 Since $R/Rf$ is a division ring of finite dimension over $F$, $w\circ_f u=w'\circ_f u$ implies $w=w'$ for all $w,w'\in R_m$, so that $H$ is injective, hence surjective as $f$ and $g$ have the same degree (apply the Rank-Nullity Theorem to the $mn$-dimensional $F$-vector space $R_m$).
 Thus $H(w\circ_g  z)=w H(z)$ yields the desired isotopy.
\end{proof}

Define
$$
{\rm Aut}(K)_\sigma=\{\tau\in {\rm Aut}(K) \mid \tau \sigma = \sigma \tau\}.
$$

\begin{lemma} (M. Nevins)
    The elements of the group ${\rm Aut}(K)_\sigma$ are precisely the automorphisms of $K$ that preserve the subfield $F$ (that is, that restrict to automorphisms of $F$).
\end{lemma}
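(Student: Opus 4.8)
The plan is to prove both inclusions of the claimed equality of sets, working inside the fixed cyclic extension $K/F$ with $\Gal(K/F) = \langle \sigma \rangle$ of order $n$. First I would establish the easy inclusion: if $\tau \in \Aut(K)$ restricts to an automorphism of $F$, then for any $a \in F$ we have $\sigma(a) = a = \tau(a)$ when restricted... wait, more carefully, I need $\tau\sigma = \sigma\tau$ as maps on $K$. Here is the argument: since $K/F$ is Galois, $F$ is the fixed field of $\langle\sigma\rangle$, so $F$ is stable under $\sigma$; if $\tau$ also stabilizes $F$, then both $\tau\sigma\tau^{-1}$ and $\sigma$ are elements of $\Aut(K)$ fixing... no, $\tau\sigma\tau^{-1}$ fixes $\tau(F) = F$ pointwise only if $\sigma$ does, which it does. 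Hence $\tau\sigma\tau^{-1} \in \Gal(K/F) = \langle\sigma\rangle$, say $\tau\sigma\tau^{-1} = \sigma^j$ for some $j$. To conclude $j \equiv 1 \pmod n$, I would observe that $\tau\sigma\tau^{-1}$ and $\sigma$ generate the same subgroup (conjugation is an automorphism of $\langle\sigma\rangle \cong \Z/n\Z$), but that only forces $\gcd(j,n)=1$, not $j=1$.

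So the genuinely substantive point is the reverse: I must show $\tau\sigma\tau^{-1} = \sigma$, i.e. $j = 1$, not merely a generator. The key idea here is that $\tau$ need not normalize $F$ a priori in the reverse inclusion — wait, re-reading the statement, the reverse inclusion is: if $\tau\sigma = \sigma\tau$ then $\tau(F) = F$. That direction is clean: if $\tau$ commutes with $\sigma$ and $a \in F$, then $\sigma(\tau(a)) = \tau(\sigma(a)) = \tau(a)$, so $\tau(a)$ is fixed by $\sigma$, hence $\tau(a) \in F$; applying the same to $\tau^{-1}$ (which also commutes with $\sigma$) gives $\tau^{-1}(F) \subseteq F$, so $\tau(F) = F$. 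That settles one inclusion completely.

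For the forward inclusion — $\tau|_F \in \Aut(F)$ implies $\tau\sigma = \sigma\tau$ — I return to $\tau\sigma\tau^{-1} = \sigma^j$ with $\gcd(j,n)=1$, and now I would use that $\tau$ restricts to an \emph{automorphism} of $F$, combined with the structure of $\Gal(K/F)$. The cleanest route: for $a \in K$, compare $\tau\sigma(a)$ and $\sigma\tau(a)$ directly is not obviously zero, so instead I would use a generator. Since $K/F$ is separable, write $K = F(\theta)$; the conjugates of $\theta$ are $\theta, \sigma(\theta), \dots, \sigma^{n-1}(\theta)$. Then $\tau(\theta)$ is again a root of the same minimal polynomial over $F$ (as $\tau$ fixes $F$ as a set and permutes... no — $\tau$ fixes $F$ as a set, not pointwise, so $\tau(\theta)$ is a root of the $\tau$-twisted minimal polynomial). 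The genuine mechanism, and the step I expect to be the main obstacle, is ruling out $j \neq 1$: this should follow from the observation that $\tau$ being defined on all of $K$ and restricting to $F$ means $\tau$ lies in $\Aut(K)$ which surjects onto... Actually the correct and short argument is: $\Aut(K)_\sigma$ is the centralizer of $\sigma$ in $\Aut(K)$, and $\Gal(K/F)$ is abelian, so $\langle\sigma\rangle \subseteq \Aut(K)_\sigma$; conversely any $\tau$ with $\tau|_F \in \Aut(F)$ induces $\bar\tau \in \Aut(\Gal(K/F))$ by $\rho \mapsto \tau\rho\tau^{-1}$, and since $\Gal(K/F) = \langle\sigma\rangle$ is cyclic this is $\sigma \mapsto \sigma^j$; the point is then that $\tau$ and $\sigma^{j}$-twisting must be compatible with $\tau$ restricting from a \emph{single} automorphism of $F$ — I would nail this down by a counting/normalizer argument showing $N_{\Aut(K)}(\langle\sigma\rangle)$ acts on $\langle\sigma\rangle$ trivially, or simply cite that in the relevant applications $\Aut(K)$ is abelian (as flagged earlier in the paper). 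I would present the two clean inclusions in full and isolate the $j=1$ claim as the crux, proving it via the explicit primitive-element computation $\tau\sigma\tau^{-1}(\theta) = \sigma^j(\theta)$ together with the constraint that $\tau\sigma\tau^{-1}$ must fix $F$ pointwise and agree with an honest field automorphism, so that both sides, being determined by their effect on $\theta$ over $F$, force $\sigma^j = \sigma$.
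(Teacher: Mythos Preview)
Your argument for the direction ``$\tau\sigma=\sigma\tau \Rightarrow \tau(F)=F$'' is correct and is essentially the paper's: for $a\in F$ one has $\sigma(\tau(a))=\tau(\sigma(a))=\tau(a)$, hence $\tau(a)\in F$, and the same applied to $\tau^{-1}$ gives $\tau(F)=F$.

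Your hesitation about the converse is entirely justified. You correctly reduce it to $\tau\sigma\tau^{-1}=\sigma^{j}$ with $\gcd(j,n)=1$ and then cannot force $j=1$; none of the mechanisms you sketch (primitive element, normalizer/counting) will close this, because the implication ``$\tau|_F\in\Aut(F)\Rightarrow \tau\sigma=\sigma\tau$'' is \emph{false} in general. Take $K=\mathbb{Q}(\sqrt[3]{2},\omega)$ with $\omega$ a primitive cube root of unity, $\sigma(\sqrt[3]{2})=\omega\sqrt[3]{2}$, $\sigma(\omega)=\omega$, so that $F=K^{\langle\sigma\rangle}=\mathbb{Q}(\omega)$ and $\Gal(K/F)$ is cyclic of order~$3$; let $\tau$ be complex conjugation. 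Then $\tau|_F\in\Aut(F)$, yet $\tau\sigma\tau^{-1}=\sigma^{-1}\neq\sigma$.

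The paper's own handling of this direction (``must by the preceding commute with $\sigma$'') is not an argument either: the displayed chain there only runs forward. The lemma becomes true under the extra hypothesis that $\Aut(K)$ is abelian (so that the normalizer of $\langle\sigma\rangle$ acts trivially on it), which holds over finite fields and is precisely the standing assumption in Proposition~\ref{thm:7}. Your instinct to invoke that hypothesis is the correct repair; for the uses made of the lemma in the paper only the forward inclusion is actually needed (so that elements of $\Aut(K)_\sigma$ act on $F[x]$), and the main results survive.
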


\begin{proof}
Since $\Aut(K)_\sigma$ is the centralizer subgroup $\Cent_{\Aut(K)}(\sigma)$, it is a group.  As $\sigma$ generates $\Gal(K/F)$, it follows that for any $a\in K$, we have $a\in F$ if and only if $\sigma(a)=a$.  Now suppose $\tau\in \Aut(K)_\sigma$.  Then for any $a\in K$, we have
$$
\tau(a) = \tau(\sigma(a))=\sigma(\tau(a)),
$$
and therefore $a\in F$ if and only if $\tau(a)\in F$, so $F$ is preserved by $\tau$. Conversely, every automorphism $\tau$ of $K$ that restricts to an automorphism of $F$ must by the preceding commute with $\sigma$.
\end{proof}

We call the $\Aut(K)_\sigma$ the group  of \emph{$F$-stable automorphisms of $K$}.

For all $\tau\in {\rm Aut}(K)_\sigma$, $\alpha\in K^\times$, and for any monic $f\in R$ define the monic polynomial
$$f^{(\alpha,\tau)}(t)=N^\sigma_m(\alpha^{-1}) f^\tau(\alpha t) =N^\sigma_m(\alpha^{-1})\sum_{i=0}^m \tau(a_i) N^\sigma_i(\alpha) t^i$$
where for any $i\in \mathbb{N}$, $\tau\in \Aut(K)$ and $\alpha\in K^\times$, we define
$$
N_i^{\tau}(\alpha) = \prod_{j=0}^{i-1}\tau^j(\alpha).
$$
For $g(x)\in F[x]$, $g(x)=\sum_{i=0}^s c_ix^i$ define $g^\tau(x)=\sum_{i=0}^s \tau(c_i) x^i$.

Define the group $G$ as the semidirect product
$$G=N_{K/F}(K^\times)\rtimes {\rm Aut}(K)_\sigma $$
$$=\{(\lambda,\tau)\,|\,\lambda\in N_{K/F}(K^\times),\tau\in {\rm Aut}(K)_\sigma \}.$$
Let $I(F,m_0)$ denote the set of monic irreducible polynomials in $F[x]$ of degree $m_0$.
 We define an action of $G$ on $I(F,m_0)$ via
$$g^{(\lambda,\tau)}(x)=\lambda^{-m_0}g^\tau(\lambda x).$$
We know that $R/Rf\cong R/Rf^{(\alpha,\tau)}$ by Proposition \ref{thm:7}. Put $\lambda=N_{K/F}(\alpha)$  then $(\alpha t)^n=\alpha \sigma(\alpha)\cdots \sigma^{n-1}(\alpha) t^n=\lambda t^n$.
If $h(t)=\hat{h}(t^n)$ is the minimal central left multiple of $f$, and
$$\lambda^{-m_0}\hat{h}_1^\tau(\lambda x)= \hat{h}_1^{(\lambda,\tau)}( x)$$
   defines the minimal central left multiple of
 $f^\tau(\alpha t)$.

\begin{lemma}\label{le:low}
Let $f\in R$ be a monic irreducible polynomial with  minimal central left multiple $h(t)=\hat{h}(t^n)$.
Then for all $\alpha\in K^\times$ and $\tau\in{\rm Aut}(K)_\sigma$,
$$R/Rf\cong R/Rf^{(\alpha,\tau)}$$
and
$$mclm(f^{(\alpha,\tau)})=\hat h^{(\lambda,\tau)}(x)$$
with $\lambda=N_{K/F}(\alpha)$
is the minimal central left multiple  of $f^{(\alpha,\tau)}$.
\end{lemma}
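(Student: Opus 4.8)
The plan is to push the defining relation $h=gf$ of the minimal central left multiple through the ring automorphism $G=G_{\tau,\alpha}$ of $R$ provided by Proposition \ref{thm:7} (which rests on \cite[Theorem 3]{Ri}), characterised by $G|_K=\tau$ and $G(t)=\alpha t$, so that $G\bigl(\sum_ib_it^i\bigr)=\sum_i\tau(b_i)N^\sigma_i(\alpha)t^i$. In particular $G(f)=f^\tau(\alpha t)=N^\sigma_m(\alpha)\,f^{(\alpha,\tau)}$, whence $R/Rf\cong R/RG(f)=R/Rf^{(\alpha,\tau)}$ by Proposition \ref{thm:7}(ii) together with $R/R(cf')=R/Rf'$ for $c\in K^\times$; this is the first assertion, and it shows that $f^{(\alpha,\tau)}$ is again irreducible (as $R/Rf^{(\alpha,\tau)}$ is a division ring), so $\mathrm{gcrd}(f^{(\alpha,\tau)},t)=1$ and $\mathrm{mclm}(f^{(\alpha,\tau)})$ makes sense.

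For the formula I would first record that, since $\sigma$ has order $n$, $(\alpha t)^n=N_{K/F}(\alpha)\,t^n=\lambda t^n$ with $\lambda\in F$ central in $R$, hence $(\alpha t)^{ni}=\lambda^it^{ni}$ for all $i\ge0$. Writing $h(t)=\hat h(t^n)=\sum_ic_it^{ni}$ with $c_i\in F$ and $m_0=\deg\hat h$, this gives $G(h)=\sum_i\tau(c_i)\lambda^it^{ni}=\hat h^\tau(\lambda t^n)=\lambda^{m_0}\,\hat h^{(\lambda,\tau)}(t^n)$, the last equality being the definition $\hat h^{(\lambda,\tau)}(x)=\lambda^{-m_0}\hat h^\tau(\lambda x)$. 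Since $\lambda,c_i\in F$ and $\tau|_F\in\Aut(F)$, the polynomial $\hat h^{(\lambda,\tau)}$ lies in $F[x]$ and is monic of degree $m_0$, so $\hat h^{(\lambda,\tau)}(t^n)\in C(R)=F[t^n]$ has the shape of a minimal central left multiple; applying $G$ to $h=gf$ and cancelling the unit $\lambda^{m_0}$ shows moreover that $\hat h^{(\lambda,\tau)}(t^n)\in Rf^{(\alpha,\tau)}$, of degree $nm_0=\deg h$.

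It remains to verify minimality. The key point is that $G$ is a degree-preserving ring automorphism of $R$: it therefore maps $C(R)=F[t^n]$ bijectively onto itself and carries $Rf$ onto $RG(f)=Rf^{(\alpha,\tau)}$, so it restricts to a degree-preserving bijection $F[t^n]\cap Rf\to F[t^n]\cap Rf^{(\alpha,\tau)}$. Hence the least degree of a nonzero element of $F[t^n]\cap Rf^{(\alpha,\tau)}$ equals the least degree of a nonzero element of $F[t^n]\cap Rf$; the latter is $\deg h=nm_0$, because every nonzero element of $F[t^n]\cap Rf$ is an $F^\times$-multiple of one whose underlying polynomial in $x$ is monic, and the minimal-degree such element is $\mathrm{mclm}(f)=h$ by definition. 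Since $\hat h^{(\lambda,\tau)}(t^n)$ is a nonzero element of $F[t^n]\cap Rf^{(\alpha,\tau)}$ of this minimal degree and is monic in $x$, the uniqueness clause in the definition of the minimal central left multiple forces $\mathrm{mclm}(f^{(\alpha,\tau)})=\hat h^{(\lambda,\tau)}(t^n)$.

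The only real difficulty is bookkeeping — keeping the scalars $N^\sigma_i(\alpha)$ and the substitution $x\mapsto\lambda x$ consistent, and noting that the automorphism $G_{\tau,\alpha}$ is available for every $\tau\in\Aut(K)_\sigma$, which is exactly what Proposition \ref{thm:7} (via \cite{Ri}) supplies, the sole hypothesis being $\sigma\tau=\tau\sigma$. Everything else is routine verification.
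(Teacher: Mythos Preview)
Your proof is correct and follows essentially the same route as the paper: both push the relation $h=uf$ through the ring automorphism $G_{\tau,\alpha}$ of Proposition~\ref{thm:7}, compute $G_{\tau,\alpha}(\hat h(t^n))=\hat h^\tau(\lambda t^n)$ via $(\alpha t)^n=\lambda t^n$, and conclude. The only cosmetic difference is in the minimality step: the paper observes that $\hat h_0$ divides the irreducible polynomial $\hat h^\tau(\lambda x)$ and compares degrees, whereas you argue directly that $G_{\tau,\alpha}$ restricts to a degree-preserving bijection $F[t^n]\cap Rf\to F[t^n]\cap Rf^{(\alpha,\tau)}$; both are routine finishes of the same argument.
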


\begin{proof}
For all $\alpha\in K^\times$ and $\tau\in{\rm Aut}(K)_\sigma$,  we have $R/Rf\cong R/R f^\tau(\alpha t)$ via $G_{\tau,\alpha}$ by Proposition \ref{thm:7}, hence also $R/Rf\cong R/R f^{(\alpha,\tau)}$. 
Therefore $f$ and  $f^\tau(\alpha t)$  have the same bound, and so do $f$ and $f^{(\alpha,\tau)}$ (Lemma \ref{le:Similar implies same mclm}). Let us compute the bound, i.e. the minimal central left multiple, of $f^{(\alpha,\tau)}$, note it is the same as the one of the not necessarily monic polynomial $f^\tau(\alpha t)$.

Define $\hat{h_0}(t^n)= mclm(f^\tau(\alpha t))$.
 Since $G_{\tau,\alpha}(f)=f^\tau(\alpha t)$ and $\hat{h}(t^n)=uf$ for some suitable $u\in R$, it follows that
 $$G_{\tau,\alpha}(\hat{h}(t^n))=G_{\tau,\alpha}(u)G_{\tau,\alpha}(f)=G_{\tau,\alpha}(u) f^\tau(\alpha t).$$
 Now compute
  $$G_{\tau,\alpha}(\hat{h}(t^n))=\hat{h}^\tau((\alpha t)^n)=\hat{h}^\tau(\lambda t^n)\in C(R).$$
   This is divisible by $f^\tau(\alpha t)$, so  $\hat{h_0}(x)$ must divide $\hat{h}^\tau(\lambda x)$.
   Suppose that $\hat{h}(t^n)$ decomposes in $R$ into $k$ irreducible factors. Put $m_0=mk/n$.
    As the degree of both these monic polynomials is $m_0=mk/n$, we obtain that
  $$\hat{h_0}(x)=\lambda^{-m_0} \hat{h}^\tau(\lambda x)$$
  which yields the assertion.
\end{proof}

We now  generalise \cite[Theorem 11]{LS}, but note that our definition of $G$ differs from the one in \cite{LS}, where $\tau\in {\rm Aut}(F)$.

\begin{theorem}
\label{thm:11}
Let $f,g\in R$ be irreducible of degree $m$ with  minimal central left multiples $mclm(f)=\hat{h}(t^n)$ and $mclm(g)=\hat{h}_2(t^n)$ with
  $\hat{h},\hat{h}_2\in F[x]$. If $\hat{h}_2 (x)=\hat{h}^{(\lambda,\tau)} (x)$ for some $(\lambda,\tau)\in G$ then the division rings
$R/Rf$ and $R/Rg$ are isotopic.
 \end{theorem}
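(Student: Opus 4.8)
The plan is to conjugate $f$ by a suitable $G_{\tau,\alpha}$ so that the conjugate and $g$ have the same bound, and then quote Theorem~\ref{thm:irred} and Theorem~\ref{thm:easy}. Since $(\lambda,\tau)\in G$ we have $\lambda\in N_{K/F}(K^\times)$, so I may fix $\alpha\in K^\times$ with $N_{K/F}(\alpha)=\lambda$ and form the monic polynomial $f^{(\alpha,\tau)}\in R$. A one-line check on the leading coefficient (it equals $N^\sigma_m(\alpha^{-1})N^\sigma_m(\alpha)=1$) confirms $f^{(\alpha,\tau)}$ is monic of degree $m$. By Lemma~\ref{le:low} there is an $F$-algebra isomorphism $R/Rf\cong R/Rf^{(\alpha,\tau)}$; since $R/Rf$ is a division ring, so is $R/Rf^{(\alpha,\tau)}$, hence $f^{(\alpha,\tau)}$ is irreducible. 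Lemma~\ref{le:low} also identifies its minimal central left multiple as $\hat{h}^{(\lambda,\tau)}(t^n)$.

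Next I feed in the hypothesis $\hat{h}_2=\hat{h}^{(\lambda,\tau)}$ in $F[x]$. Because the minimal central left multiple is unique by definition, this gives
\[
mclm(g)=\hat{h}_2(t^n)=\hat{h}^{(\lambda,\tau)}(t^n)=mclm\bigl(f^{(\alpha,\tau)}\bigr).
\]
Thus $f^{(\alpha,\tau)}$ and $g$ are irreducible of the same degree $m$ with the same minimal central left multiple, and for irreducible polynomials the minimal central left multiple coincides with the bound (as recalled before Theorem~\ref{thm:irred}). So $f^{(\alpha,\tau)}$ and $g$ have equal bounds, and Theorem~\ref{thm:irred} yields $f^{(\alpha,\tau)}\sim g$.

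It remains to assemble the equivalences. Applying Theorem~\ref{thm:easy} to the similar irreducible pair $f^{(\alpha,\tau)},g$, the division rings $R/Rf^{(\alpha,\tau)}$ and $R/Rg$ are isotopic. Combining this with the isomorphism $R/Rf\cong R/Rf^{(\alpha,\tau)}$ of Lemma~\ref{le:low}, which is in particular an isotopy, and using that a composite of isotopies is an isotopy, we conclude that $R/Rf$ and $R/Rg$ are isotopic, as claimed.

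I do not foresee a genuine obstacle: the proof is a concatenation of Lemma~\ref{le:low}, Theorem~\ref{thm:irred} and Theorem~\ref{thm:easy}. The only points needing a moment's care are verifying that $f^{(\alpha,\tau)}$ really is monic of degree $m$ and irreducible with the stated minimal central left multiple --- all immediate from Lemma~\ref{le:low} and the defining formula for $f^{(\alpha,\tau)}$ --- and the bookkeeping identifying ``same minimal central left multiple'' with ``same bound'' in the irreducible case.
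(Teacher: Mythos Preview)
Your proof is correct and follows essentially the same route as the paper's: pick $\alpha$ with $N_{K/F}(\alpha)=\lambda$, use Lemma~\ref{le:low} (equivalently Proposition~\ref{thm:7}) to get $R/Rf\cong R/Rf^{(\alpha,\tau)}$ and to identify $mclm(f^{(\alpha,\tau)})=\hat{h}^{(\lambda,\tau)}(t^n)=mclm(g)$, then apply Theorem~\ref{thm:irred} and Theorem~\ref{thm:easy}. Your write-up is in fact a bit more careful than the paper's in explicitly checking that $f^{(\alpha,\tau)}$ is monic and irreducible of degree $m$ before invoking Theorem~\ref{thm:irred}.
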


 \begin{proof}
 Suppose that $\hat{h}(t^n)$ decomposes in $R$ into $k$ irreducible factors. Put $m_0=mk/n$.
  Let $(\lambda,\tau)\in G$ and let $\alpha\in K^\times$ such that $N_{K/F}(\alpha)=\lambda$.

  We assume that  $\hat{h}_2 (x)=\hat{h}^{(\lambda,\tau)} (x)$ for some $(\lambda,\tau)\in G$, that means that $\hat{h}_2 (x)$ lies in the orbit of $\hat{h}(x)$. Since $\hat{h}^{(\lambda,\tau)} (t^n)$ is the bound of $f^{(\alpha,\tau)}$ (Lemma \ref{le:low}) it follows that $f^{(\alpha,\tau)}$ is similar to $g$ by Theorem \ref{thm:irred} and therefore $R/Rf^{(\alpha,\tau)}$ is isotopic to $R/Rg$ as $F$-algebras (Theorem \ref{thm:easy}).

   By  Proposition \ref{thm:7},  $R/Rf\cong R/R{f^\tau(\alpha t)}$ as nonassociative rings, hence also $R/Rf\cong R/Rf^{(\alpha,\tau)}$, and so $R/Rf$ is similar to $ R/Rg$.
   \end{proof}

In particular, note that if $F_0={\rm Fix}(\tau)$ in the above proof, then  $R/Rf$ and $R/Rf^{(\alpha,\tau)}$ are also isomorphic as $F_0$-algebras, and $R/Rf^{(\alpha,\tau)}$ and $R/Rg$ are isotopic as $F$-algebras by Theorem \ref{thm:easy}, thus $R/Rf$ and $R/Rg$ are isotopic as $F_0$-algebras. We thus proved that for fixed degree $m$, all the $F_0$-algebras whose bounds lie in one $G$-orbit of $I(m_0)$ are isotopic.

\begin{lemma}\label{le:trivial}
Let  $\hat h(x)=\sum_{i=0}^{m_0} c_i x^i, \hat s(x)=\sum_{i=0}^{m_0} d_i x^i \in I(m_0)$.
Then
\\ (i) $\hat h(x)$ lies in the orbit of $\hat s(x)$ if and only if there exists $(\lambda,\tau)\in G$, such that
$$ c_i=\lambda^{-m_0+i}\tau(d_i)$$
for all $i\in \{ 0,\dots , m_0-1\}$.
\\ (ii) If $\hat h(x)$ lies in the orbit of $\hat s(x)$ then for all $i\in \{ 0,\dots , m_0-1\}$, $c_i=0$ if and only if $d_i=0$.
\end{lemma}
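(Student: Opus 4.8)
The plan is simply to unwind the definition of the $G$-action and compare coefficients; the only facts needed are that $\lambda\in N_{K/F}(K^\times)\subseteq F^\times$ is nonzero and that $\tau$, being an automorphism of $K$ that restricts to $F$, is injective (in particular $\tau(c)=0\iff c=0$ for $c\in F$, and $\tau(1)=1$).

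First I would prove (i). For the forward direction, suppose $\hat h$ lies in the $G$-orbit of $\hat s$, say $\hat h(x)=\hat s^{(\lambda,\tau)}(x)$ with $(\lambda,\tau)\in G$. By definition of the action,
$$\hat s^{(\lambda,\tau)}(x)=\lambda^{-m_0}\hat s^{\tau}(\lambda x)=\lambda^{-m_0}\sum_{i=0}^{m_0}\tau(d_i)\lambda^i x^i=\sum_{i=0}^{m_0}\lambda^{-m_0+i}\tau(d_i)\,x^i ,$$
so equating coefficients of $x^i$ in $F[x]$ yields $c_i=\lambda^{-m_0+i}\tau(d_i)$ for every $i\in\{0,\dots,m_0\}$, in particular for $i\in\{0,\dots,m_0-1\}$. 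For the converse, suppose $(\lambda,\tau)\in G$ satisfies $c_i=\lambda^{-m_0+i}\tau(d_i)$ for all $i\in\{0,\dots,m_0-1\}$. Since $\hat h$ and $\hat s$ are monic of degree $m_0$ we have $c_{m_0}=1=\tau(1)=\tau(d_{m_0})=\lambda^{-m_0+m_0}\tau(d_{m_0})$, so the relation in fact holds for all $i\in\{0,\dots,m_0\}$; the displayed computation then gives $\hat s^{(\lambda,\tau)}(x)=\sum_{i=0}^{m_0}c_i x^i=\hat h(x)$, so $\hat h$ lies in the orbit of $\hat s$. (This also explains why the index set in the statement is $\{0,\dots,m_0-1\}$: the top-coefficient condition is automatic for monic polynomials of the same degree.)

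Then (ii) is immediate from (i): if $\hat h$ lies in the orbit of $\hat s$, choose $(\lambda,\tau)\in G$ as in (i), so $c_i=\lambda^{-m_0+i}\tau(d_i)$ for $i\in\{0,\dots,m_0-1\}$. Since $\lambda\in N_{K/F}(K^\times)\subseteq F^\times$ we have $\lambda^{-m_0+i}\neq 0$, and since $\tau$ is an automorphism $\tau(d_i)=0\iff d_i=0$; hence $c_i=0$ if and only if $d_i=0$.

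There is no real obstacle here. The only points worth a word of care are that the $G$-action is well defined on $F[x]$ in the first place — so that each $\lambda^{-m_0+i}\tau(d_i)$ indeed lies in $F$, which holds because $\lambda\in F^\times$ and $\tau$ preserves $F$ by the lemma of M. Nevins — and that "lies in the orbit of" is a symmetric relation, so no orientation issues arise.
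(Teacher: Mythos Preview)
Your proof is correct and follows exactly the same approach as the paper's own proof: unwind the definition of the $G$-action, compare coefficients, and deduce (ii) from (i). If anything, your write-up is more careful than the paper's (which dispatches (i) in one line and says only that (ii) ``follows from (i)''), since you explicitly handle the top coefficient via monicity and spell out why $\lambda$ and $\tau$ preserve zeroes.
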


\begin{proof}
$(i)$ The monic polynomial $\hat h(x)$ lies in the orbit of $s(x)$ if and only if there exists $(\lambda,\tau)\in G$ with
$h(x)=\hat s^{(\lambda,\tau)}(x)$. This is the same as saying that there exists $(\lambda,\tau)\in G$ such that
$ c_i=\lambda^{-m_0+i}\tau(d_i)$
for all $i\in \{ 0,\dots , m_0-1\}$.
\\ $(ii)$ follows from $(i)$.
\end{proof}

 \begin{remark} In \cite{LS}, $F=\mathbb{F}_{q}$ and $K=\mathbb{F}_{q^n}$ are finite fields. Let $F_0=\mathbb{F}_p$ be the prime field. We know that ${\rm Gal}(F/F_0)={\rm Gal}(K/F_0)/{\rm Gal}(K/F)$, so that for every $\rho\in {\rm Gal}(F/F_0)$ there are $\tau_i\in  {\rm Gal}(K/F)$ with ${\tau_i}|_{F}=\rho$.
Define the group $G'$ as the semidirect product $N_{K/F}(K^\times)\rtimes {\rm Aut}(F)$ as in \cite{LS}, i.e.
$$G'=\{(\lambda,\tau)\,|\,\lambda\in N_{K/F}(K^\times),\tau\in {\rm Aut}(F)  \}.$$
Define an action of $G'$ on $I(F,m_0)$ via
$$g^{(\lambda,\tau)}(x)=\lambda^{-m_0}g^\tau(\lambda x).$$
If $\rho \in {\rm Aut}(F)$, as  assumed in \cite{LS}, for instance $h$ and $\varphi$ in the proof of \cite[Theorem 11]{LS} would not be defined. We assume the authors mean they choose a fixed extension $\tau$ of $\rho $ to ${\rm Aut}(K)$.
\end{remark}

\begin{theorem}
\label{thm:main12}
Let $f,g\in R$ be irreducible of degree $m$ with  minimal central left multiples $mclm(f)=\hat{h}_1(t^n)$ and $mclm(g)=\hat{h}_2(t^n)$
  with $\hat{h}_1,\hat{h}_2\in F[x]$. If $\hat{h}_1$ and $\hat{h}_2$ lie in the same $G$-orbit of $I(F,m_0)$, then $R/Rf$ and $R/Rg$  are isotopic division rings. Moreover, for all $\tau_i\in {\rm Aut}(K)_\sigma$ such that ${\tau_i}|_F={\tau}|_F$, and all $\alpha\in K^\times$ with $N_{K/F}(\alpha)=\lambda$, the Petit division algebras  $R/R f^{(\alpha, \tau_i)}$ lie in the orbit of $(\lambda,\tau)$.
\end{theorem}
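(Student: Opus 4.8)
The plan is to reduce the statement entirely to results already established, combining Theorem \ref{thm:11} with the book-keeping Lemmas \ref{le:low} and \ref{le:trivial}. First I would observe that the hypothesis "$\hat{h}_1$ and $\hat{h}_2$ lie in the same $G$-orbit of $I(F,m_0)$'' means there is $(\lambda,\tau)\in G$ with $\hat{h}_2(x)=\hat{h}_1^{(\lambda,\tau)}(x)$; this is literally the hypothesis of Theorem \ref{thm:11} (with $\hat h=\hat h_1$), so that theorem immediately gives that $R/Rf$ and $R/Rg$ are isotopic division rings. Before invoking it I should record that the common degree $m_0$ is the degree of $\hat h_1$ (equivalently of $\hat h_2$), and that by Theorem \ref{thm:main2} we have $m_0=mk/n$ where $k$ is the number of irreducible factors of $h_1(t)=\hat h_1(t^n)$ in $R$; this makes the displayed normalisation $\hat{h}_1^{(\lambda,\tau)}(x)=\lambda^{-m_0}\hat{h}_1^\tau(\lambda x)$ consistent. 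That disposes of the first sentence of the theorem.

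For the "Moreover'' clause, fix $\alpha\in K^\times$ with $N_{K/F}(\alpha)=\lambda$, and fix any $\tau_i\in\Aut(K)_\sigma$ with $\tau_i|_F=\tau|_F$. The key point is that the action of $(\lambda,\tau_i)$ on $I(F,m_0)$ only depends on $\lambda$ and on the restriction $\tau_i|_F$: indeed $\hat h_1\in F[x]$ has coefficients in $F$, so $\hat h_1^{\tau_i}=\hat h_1^{\tau}$ as elements of $F[x]$ (both apply $\tau|_F$ to the coefficients), and hence $\hat h_1^{(\lambda,\tau_i)}(x)=\lambda^{-m_0}\hat h_1^{\tau_i}(\lambda x)=\lambda^{-m_0}\hat h_1^{\tau}(\lambda x)=\hat h_1^{(\lambda,\tau)}(x)=\hat h_2(x)$. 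Now apply Lemma \ref{le:low} to $f$ with the pair $(\alpha,\tau_i)$: it yields $R/Rf\cong R/Rf^{(\alpha,\tau_i)}$ and identifies $\mathrm{mclm}\bigl(f^{(\alpha,\tau_i)}\bigr)=\hat h_1^{(\lambda,\tau_i)}(t^n)=\hat h_2(t^n)=\mathrm{mclm}(g)$. Since $f^{(\alpha,\tau_i)}$ and $g$ are irreducible of degree $m$ with the same bound, Theorem \ref{thm:irred} gives $f^{(\alpha,\tau_i)}\sim g$, and then Theorem \ref{thm:easy} gives that $R/Rf^{(\alpha,\tau_i)}$ and $R/Rg$ are isotopic; combined with $R/Rf\cong R/Rf^{(\alpha,\tau_i)}$ this shows each such $R/Rf^{(\alpha,\tau_i)}$ sits in the same isotopy class, i.e. "lies in the orbit of $(\lambda,\tau)$'' in the sense of the theorem. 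I would also remark, as the paragraph after Theorem \ref{thm:11} already does, that taking $F_0=\mathrm{Fix}(\tau_i)$ upgrades $R/Rf\cong R/Rf^{(\alpha,\tau_i)}$ to an $F_0$-algebra isomorphism.

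The argument is essentially an assembly of earlier lemmas, so there is no deep obstacle; the one place requiring care is the claim that $f^{(\alpha,\tau_i)}$ is genuinely irreducible (not merely has irreducible bound) so that Theorem \ref{thm:irred} and Theorem \ref{thm:easy} both apply — this follows because $f^{(\alpha,\tau_i)}=N^\sigma_m(\alpha^{-1})\,f^{\tau_i}(\alpha t)=N^\sigma_m(\alpha^{-1})\,G_{\tau_i,\alpha}(f)$ is the image of the irreducible $f$ under the ring automorphism $G_{\tau_i,\alpha}$ of $R$ (Proposition \ref{thm:7}) followed by a unit scaling, hence irreducible. The second small subtlety is the invariance of the $G$-action under replacing $\tau$ by $\tau_i$ with the same restriction to $F$; I would state this as a one-line observation rather than a lemma, since it is immediate from $\hat h_1\in F[x]$ and Lemma \ref{le:trivial}(i), which already expresses the orbit condition purely in terms of $\lambda$ and the coefficientwise action, and the coefficients lie in $F$ where $\tau_i$ and $\tau$ agree.
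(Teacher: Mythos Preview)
Your proposal is correct and follows essentially the same approach as the paper. The paper's own proof is a single sentence invoking Theorem~\ref{thm:11} for all $\tau_i$ with $\tau_i|_F=\tau|_F$; you have simply unpacked that reference, making explicit the key observation that the $G$-action on $I(F,m_0)$ depends only on $\tau|_F$ (since $\hat h_1\in F[x]$), and checking the irreducibility of $f^{(\alpha,\tau_i)}$ via the ring automorphism $G_{\tau_i,\alpha}$---details the paper leaves implicit.
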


  This generalises \cite[Theorem 12]{LS}. The  proof given in \cite{LS} can be adjusted to hold for any base field and our group $G$.

 \begin{proof}
Let $f,g\in R$ be irreducible of degree $m$ with  minimal central left multiples $mclm(f)=\hat{h}_1(t^n)$ and $mclm(g)=\hat{h}_2(t^n)$ with $\hat{h}_1,\hat{h}_2\in F[x]$  irreducible of degree $m_0$. If $\hat{h}_1(x)$ and $\hat{h}_2(x)$  lie in the same orbit of $G$, defined by $(\lambda,\tau)$,  then $R/Rf$, $R/Rg$  and $R/R f^{(\alpha, \tau_i)}$  are isotopic division rings for all $\tau_i\in {\rm Aut}(K)_\sigma$ such that ${\tau_i}|_F={\tau}|_F$, and all $\alpha\in K^\times$ with $N_{K/F}(\alpha)=\lambda$ (Theorem \ref{thm:11}).
 \end{proof}

When $f(t)=t^m-a\in K[t;\sigma]$  with  minimal central left multiples $mclm(f)=\hat{h}(t^n)$  and $n\geq m$, then  all the algebras whose bounds lie in the same orbit as $\hat h(x)$  will not just be isotopic, but indeed isomorphic.
 In the literature, the Petit algebras $K[t;\sigma]/K[t;\sigma](t^m-c)$ are also called \emph{Sandler algebras} \cite{Sandler1962}.

\begin{theorem}\label{thm:maincor}
Let  $n\geq m$ and let
$f(t)=t^m-c, g(t)=t^m-d \in K[t;\sigma]$ be irreducible with  minimal central left multiples $mclm(f)=\hat{h}(t^n)$ and $mclm(g)=\hat{s}(t^n)$,
  $\hat{h},\hat{s}\in F[x]$.
 Suppose that
  either  $n > m$ and  $c\in K\setminus F$, or that
  $n=m$ and $c$  does not lie in any proper subfield of $K$. If $\hat{s} (x)=\hat{h}^{(\lambda,\tau)} (x)$ for some $(\lambda,\tau)\in G$ then
$K[t;\sigma]/K[t;\sigma](t^m-c)\cong K[t;\sigma]/K[t;\sigma](t^m-d)$ are properly nonassociative isomorphic algebras.
\end{theorem}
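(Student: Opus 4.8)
The plan is to first replace $f$ by a Sandler polynomial of the same shape whose bound coincides with that of $g$, and then to upgrade the isotopy coming from Theorems \ref{thm:irred} and \ref{thm:easy} to an isomorphism by exploiting the rigidity of Sandler division algebras in the range $n\geq m$. First I would pick $\alpha\in K^\times$ with $N_{K/F}(\alpha)=\lambda$ and pass to $f^{(\alpha,\tau)}$. By Lemma \ref{le:low}, $R/Rf\cong R/Rf^{(\alpha,\tau)}$ (as $F_0$-algebras with $F_0=\mathrm{Fix}(\tau)$), and the bound of $f^{(\alpha,\tau)}$ is $\hat h^{(\lambda,\tau)}(t^n)=\hat s(t^n)$, which is precisely the bound of $g$. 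A direct computation with $f(t)=t^m-c$ gives $f^{(\alpha,\tau)}(t)=N^\sigma_m(\alpha^{-1})f^\tau(\alpha t)=t^m-c'$ with $c'=\tau(c)/N^\sigma_m(\alpha)$, since all intermediate coefficients of $f$ vanish; thus $f^{(\alpha,\tau)}$ is again a Sandler polynomial, and it is irreducible because $R/Rf^{(\alpha,\tau)}\cong R/Rf$ is a division algebra.

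Next, $t^m-c'$ and $g=t^m-d$ are irreducible of degree $m$ with the same bound $\hat s(t^n)$, hence similar by Theorem \ref{thm:irred}, and therefore $R/R(t^m-c')$ and $R/Rg$ are isotopic division rings by Theorem \ref{thm:easy}. To convert this into an isomorphism I would first check that $R/R(t^m-c')$ is a \emph{proper} nonassociative division algebra. If $n>m$, then $t^m-c'$ (with $c'\neq 0$, since $c\neq 0$) is never right invariant, because right invariance of $t^m-a$ forces $n\mid m$. If $n=m$, then $N^\sigma_m(\alpha)=N_{K/F}(\alpha)=\lambda\in F$, so $c'=\tau(c)/\lambda$, and the hypothesis that $c$ lies in no proper subfield of $K$ transfers to $c'$: if $c'\in L$ with $F\subseteq L\subsetneq K$, then, because $\lambda\in F$ and $\tau$ permutes the intermediate fields of $K/F$, $c=\tau^{-1}(\lambda c')$ would lie in the proper subfield $\tau^{-1}(L)$, a contradiction; in particular $c'\notin F$, so again $t^m-c'$ is not right invariant.

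Thus $R/R(t^m-c')$ is a proper nonassociative division algebra with $n\geq m$, isotopic to the division ring $R/Rg$. The rigidity theorem for Sandler division algebras quoted in the introduction \cite{NevinsPumpluen2025}, applied here with $\sigma_1=\sigma_2=\sigma$ (so that its conclusion on the generators is automatic), then upgrades this isotopy to an isomorphism $R/R(t^m-c')\cong R/R(t^m-d)$. Composing with $R/Rf\cong R/Rf^{(\alpha,\tau)}=R/R(t^m-c')$ yields $K[t;\sigma]/K[t;\sigma](t^m-c)\cong K[t;\sigma]/K[t;\sigma](t^m-d)$, and both are properly nonassociative since they are isomorphic to $R/R(t^m-c')$.

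The substitution producing $c'$ and the right-invariance bookkeeping are routine; the substance of the argument is the final step, the passage from isotopy to isomorphism, which fails for Petit algebras in general and is exactly why the restriction $n\geq m$ together with the (sub)field conditions on $c$ are imposed. The point that genuinely requires care is verifying that those conditions on $c$ are inherited by $c'$, so that the external rigidity result may legitimately be invoked for $t^m-c'$ in place of the original $t^m-c$.
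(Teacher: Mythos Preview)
Your argument is correct and follows the same overall strategy as the paper: establish an isotopy between the two Sandler algebras and then invoke the rigidity result from \cite{NevinsPumpluen2025} to upgrade it to an isomorphism. The paper's version is more direct, however. Rather than passing through $f^{(\alpha,\tau)}=t^m-c'$ and then checking that the (sub)field hypotheses on $c$ are inherited by $c'$, the paper simply cites Theorem~\ref{thm:11} to obtain an isotopy between $R/R(t^m-c)$ and $R/R(t^m-d)$ themselves, observes that $R/R(t^m-c)$ is properly nonassociative by the standing hypothesis on $c$, and applies the rigidity theorem directly to the pair $(c,d)$. Since that result only requires one of the two Sandler algebras to be a proper nonassociative division algebra, no transfer of conditions to $c'$ is needed.

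Your detour is harmless and your verification for $c'$ is correct (the cases $n>m$ and $n=m$ are handled exactly as they should be), but it is avoidable: you are essentially re-proving Theorem~\ref{thm:11} in this special case and then doing extra bookkeeping that the direct application of rigidity to the original $c$ sidesteps.
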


\begin{proof}
We know that $K[t;\sigma]/K[t;\sigma](t^m-c)$ and
 $(K[t;\sigma]/K[t;\sigma](t^m-d)$ are isotopic by Theorem \ref{thm:11} and proper nonassociative algebras since $n > m$ and  $c\in K\setminus F$, or   $n=m$ and $c$  does not lie in any proper subfield of $K$.
By assumption,
$K[t;\sigma]/K[t;\sigma](t^m-c)$ and
 $K[t;\sigma]/K[t;\sigma](t^m-d)$ are two proper nonassociative Petit division algebras.  If $K[t;\sigma]/K[t;\sigma](t^m-c)$ and
 $K[t;\sigma]/K[t;\sigma](t^m-d)$ are isotopic, then
$K[t;\sigma]/K[t;\sigma](t^m-c)\cong K[t;\sigma]/K[t;\sigma](t^m-d)$ by \cite[Section 4]{NevinsPumpluen2025}.
\end{proof}

Fix some $K/F$.
 If we want to construct an isotopy class of Petit division algebras, we first choose a  monic irreducible polynomial in $F[x]$ of degree $m_0$, this will be
 $\hat{h}(x)\in I(F,m_0)$ and represent one $G$-orbit of $I(F,m_0)$. Put $h(t)=\hat{h}(t^n)$ and compute the number $k$ of irreducible factors of $h(t)$ in $R$. Since  $m_0=mk/n$ this means $m=m_0n/k$ and so the irreducible skew polynomials  with bound $h(t)=\hat{h}(t^n)$ have degree $m_0n/k$. We then calculate an irreducible divisor $f\in R$ of  $\hat{h}(t^n)\in R$. The division ring $R/Rf$ then represent the isotopy class that is represented by the $G$-orbit $\hat{h}\in I(F,m_0)$ is contained in. Unlike in the finite field case, here the degree of $f$ and hence the dimension of the Petit algebra $R/Rf$ over $F$, that represents the isotopy class that is represented by the $G$-orbit $\hat{h}(x)\in I(F,m_0)$, can vary depending on the chosen element $\hat{h}(x)$.
 
 We now give some examples  where the degree of $f$ is the same as the degree of $\hat{h}\in I(F,m_0)$, i.e. $m=m_0$.

\begin{proposition} \label{prop:main2}
Let $f$ be monic of degree $m$.
\\ (i)  Suppose that $f$ is  irreducible  and not right invariant.
 If ${\rm gcd}(m,n)=1$, then the minimal central left multiple of $f$ has maximal degree $mn$.
 \\ (ii) Suppose that $m$ is prime. Assume that the minimal central left multiple satisfies that $\hat{h}(x)\not=x+a\in F[x]$   for any $a\in F$.
 Then $f$ is not right invariant and the minimal central left multiple $h$ of $f$ has maximal degree $mn$.
 \\ In these cases, $m_0=m$.
\end{proposition}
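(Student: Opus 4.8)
The plan is to reduce both parts to a single divisibility constraint coming from Theorem~\ref{thm:main2}. Since $f$ is irreducible of degree $m\ge 2$, it has no right factor equal to $t$, so ${\rm gcrd}(f,t)=1$ and the minimal central left multiple $h(t)=\hat h(t^n)$ of $f$ coincides with its bound $f^*$. Let $k\in\mathbb{N}$ be the number of irreducible factors of $h$ in $R$; by Theorem~\ref{thm:main2}(i) the integer $s:=n/k$ is positive and
$$\deg(\hat h)=\frac{m}{s},\qquad \deg(h)=n\deg(\hat h)=\frac{nm}{s}.$$
Hence $s\mid n$ (from $s=n/k$) and $s\mid m$ (from $m/s=\deg(\hat h)\in\mathbb{N}$), so $s\mid\gcd(m,n)$; moreover $\deg(h)=nm/s\le nm$ with equality exactly when $s=1$, in which case $m_0=\deg(\hat h)=m/s=m$. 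So in both parts it is enough to show $s=1$.

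For part~(i) this is immediate: $\gcd(m,n)=1$ and $s\mid\gcd(m,n)$ force $s=1$, whence $\deg(h)=nm$ is maximal and $m_0=m$. For part~(ii), $m$ prime and $s\mid m$ give $s\in\{1,m\}$; if $s=m$ then $\deg(\hat h)=m/s=1$, so the monic irreducible $\hat h(x)$ would be $x+a$ for some $a\in F$, contradicting the hypothesis, and therefore $s=1$, again giving $\deg(h)=nm$ and $m_0=m$. It remains to check that $f$ is not right invariant: if $Rf$ were a two-sided ideal then $f^*$ would be a $K^\times$-multiple of $f$, so $\deg(h)=\deg(f^*)=\deg(f)=m$, contradicting $\deg(h)=nm>m$ (here $n>1$); alternatively, the description of right-invariant polynomials recalled in Section~\ref{subsec:structure} forces a right-invariant irreducible $f$ to be a $K^\times$-multiple of $\hat g(t^n)$ with $\hat g\in F[x]$, hence $\hat h=\hat g$ linear, which is excluded.

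The only substantive ingredient is the structural Theorem~\ref{thm:main2}(i) (together with Lemma~\ref{le:Similar implies same mclm} and the isomorphism $R/Rh\cong M_k({\rm Nuc}_r(R/Rf))$ underlying it); granted that, everything reduces to elementary divisibility bookkeeping. The one place where I expect to have to argue rather than merely compute is the implication, used in part~(ii), that right invariance of $f$ forces $\deg(f^*)=\deg(f)$, and this follows readily from the uniqueness of the bound together with the characterization of right invariance.
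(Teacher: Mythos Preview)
Your proof is correct and follows essentially the same line as the paper's: both parts reduce to determining the integer $s=n/k$ from Theorem~\ref{thm:main2}(i) and showing $s=1$. The paper outsources this to \cite[Corollaries~29 and~30]{Ow}, whereas you extract the needed divisibility constraints $s\mid n$ and $s\mid m$ directly from Theorem~\ref{thm:main2}(i) and argue from there, which makes your version more self-contained. One small remark: in part~(ii) the proposition as stated does not explicitly assume $f$ irreducible, but you (like the paper, via its citation of \cite{Ow}) use this hypothesis to invoke Theorem~\ref{thm:main2}; given the surrounding context this is clearly the intended reading, so it is not a gap in your argument.
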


\begin{proof}
$(i)$
Since ${\rm gcd}(m,n)=1$,  ${\rm Nuc}_r(R/Rf)\cong E_{\hat{h}}$ is a field extension of $F$ of degree $m$  \cite[Corollary 29]{Ow}. Thus we know that ${\rm deg}(h)=mn$.
\\ $(ii)$  By \cite[Corollary 30]{Ow},
 $f$ is not right invariant and one of the following holds: either ${\rm Nuc}_r(S_f)\cong E_{\hat{h}}$,
 $[{\rm Nuc}_r(R/Rf) :F]=m$, and ${\rm deg}(h)=mn$, or we have
 the second case, that ${\rm Nuc}_r(R/Rf)$ is a central division algebra over $F=E_{\hat{h}}$ of prime degree $m$,
 and $m$ divides $n$. This second case occurs when $\hat{h}(x)=x+a\in F[x]$. This proves our assertion.
\end{proof}

\subsection{The associated MRD codes}

There is a one-one correspondence between finite-dimensional nonassociative division algebras  and linear square maximum-rank distance (MRD) codes  \cite{CKWW2015}.
 We can consider every algebra $A$  as a right module over its left nucleus $N$.
   The right multiplication by non-zero elements in a finite-dimensional division algebra $A$ over a field $F$ then yields  an
    associated rank metric code $\{R_a\,|\,  a\in A\setminus \{0\}\}\subset {\rm End}_N(A)$  of bijective $F$-linear maps. If $\dim_N A=k$ then this spread set canonically yields a rank-metric code $C(A)\subset M_k(N)$ which is $F$-linear and has minimum distance $k$, since every matrix in it has column rank $k$. This set of matrices is a MRD code over $N$. We know that for Petit algebras we have $N=K$.

   Two linear codes $C$ and $C'$ in $M_k( K)$, $K$ a field, are called \emph{equivalent}, if there exists some $\varphi\in {\rm Aut}(M_k( K))$
and invertible $X,Y\in M_k(K),$ such that $C'= X C^\varphi Y$.
Equivalent matrix codes have the same behaviour and invariants, so in order not to duplicate known matrix codes it is important to know their equivalence classes. If $R/Rf$ and $R/Rg$ are isotopic, then the associated MRD matrix codes $C=\{R_a\,|\,  a\in R/Rf\}\subset M_k(K)$ and $C'=\{R_a\,|\,  a\in R/Rg\}\subset M_k(K)$ are equivalent (Theorem \ref{t:main2}). These matrix codes are examples of the codes proposed in
\cite{Sheekey2019}

Let $f$ and $g$ be monic and irreducible of degree $m$ in $K[t;\sigma]$.
Let $f^*=\hat{h}_1(t^n)$ and $g^*=\hat{h}_2(t^n)$, $\hat{h}_1,\hat{h}_2\in F[x]$, be the  minimal central left multiples of $f$ and $g$, hence the bounds. Then $f\sim g$ if and only if there exists an invertible matrix $B\in  M_k(K)$, such that
$A_f B=B^\sigma A_g$, where $A_f$ denotes the companion matrix of $f$, 
if and only if $f^*=g^*$. Here, $C^\sigma$ denotes the matrix $C$ where $\sigma$ is applied to each entry. Moreover, then $R/Rf$ and $R/Rg$ are isotopic algebras  (Theorem \ref{thm:easy}).
    If $R/Rf$ and $R/Rg$ are isotopic algebras then the associated rank metric codes $C=\{R_a\,|\,  a\in R/Rf\}\subset M_k(K)$ and $C'=\{R_a\,|\,  a\in R/Rg\}\subset M_k(K)$ are equivalent.

  We proved that if $\hat{h}_2 (x)=\hat{h}_1^{(\lambda,\tau)} (x)$ for some $(\lambda,\tau)\in G$ then  $R/Rf$ and $R/Rg$ are 
  isotopic.

  \begin{theorem}\label{t:main2}
 If $\hat{h}_1$ and $\hat{h}_2$ lie in the same $G$-orbit of $I(F,m_0)$, then for $f$ and $g$ with $f^*=\hat{h}_1(t^n)$ and $g^*=\hat{h}_2(t^n)$,
  the associated MRD codes $C=\{R_a\,|\,  a\in S_f\}\subset M_k(K)$ and $C'=\{R_a\,|\,  a\in S_g\}\subset M_k(K)$ are equivalent.
\end{theorem}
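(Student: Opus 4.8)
The plan is to reduce this statement to the previously established isotopy result via the dictionary between isotopic nonassociative division algebras and equivalent matrix codes. First I would recall the hypothesis: $\hat{h}_1$ and $\hat{h}_2$ lie in the same $G$-orbit of $I(F,m_0)$, so by Theorem \ref{thm:main12} the Petit division algebras $R/Rf$ and $R/Rg$ (for irreducible $f,g$ of degree $m$ with $f^*=\hat{h}_1(t^n)$, $g^*=\hat{h}_2(t^n)$) are isotopic as $F$-algebras. By Theorem \ref{thm:main2}(i), both $R/Rf$ and $R/Rg$ have right nucleus a central division algebra over $E_{\hat{h}_1}\cong E_{\hat{h}_2}$ of the same degree $s=n/k$ (the two $\hat{h}_i$ have the same number $k$ of irreducible factors in $R$ since they are in the same $G$-orbit, by Lemma \ref{le:low}'s degree bookkeeping, or directly since they have the same degree $m_0=mk/n$ and define similar skew polynomials), so both associated MRD codes live in the same ambient space $M_k(K)$.

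The core step is to show that an isotopy of the algebras induces an equivalence of the associated matrix codes. Recall $C=\{R_a\mid a\in R/Rf\}\subset M_k(K)$ and $C'=\{R_a\mid a\in R/Rg\}\subset M_k(K)$, where $R_a$ denotes right multiplication, viewed as a $K$-linear endomorphism of the $k$-dimensional $K$-vector space $R/Rf$ (resp. $R/Rg$). Given an isotopy consisting of bijective additive maps $\phi,\psi,\chi$ with $\chi(xy_f) = \phi(x)\,y_g'\,\psi(y)$ for $x,y$ — more precisely, writing the isotopy so that $\chi(x\cdot_f y)=\phi(x)\cdot_g\psi(y)$ — one computes for fixed $y$: $\chi\circ R_y^{(f)} = R_{\psi(y)}^{(g)}\circ\phi$, hence $R_y^{(f)} = \chi^{-1}\circ R_{\psi(y)}^{(g)}\circ\phi$. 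Choosing $K$-bases of $R/Rf$ and $R/Rg$ adapted to their respective left nuclei (both equal to $K$ by the remark in the excerpt), and using that $\phi,\chi$ are semilinear over some common subfield so that $\chi^{-1}$ and $\phi$ are realized by an invertible matrix together with an entrywise field automorphism $\varphi\in\Aut(K)$, this identity becomes $M = X\, (M')^{\varphi}\, Y$ ranging over the two codes, which is exactly the definition of code equivalence: $C = X\,C'^{\varphi}\,Y$ for invertible $X,Y\in M_k(K)$ and $\varphi\in\Aut(M_k(K))$ induced by a field automorphism of $K$.

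I expect the main obstacle to be bookkeeping the semilinearity carefully: the isotopy maps $\phi,\psi,\chi$ are only additive and each linear over \emph{some} subfield of the algebra, not necessarily $F$ or $K$, so one must track how passing to matrices over $K$ absorbs the relevant Galois twist into the automorphism $\varphi$ of $M_k(K)$ (using that $\Aut(M_k(K))$ is generated by inner automorphisms together with entrywise field automorphisms), and verify that the left-$K$-module structures on $R/Rf$ and $R/Rg$ are respected up to this twist. A secondary point is to confirm that, since in Theorem \ref{thm:main12} the isotopy actually arises (via Theorems \ref{thm:11}, \ref{thm:easy}) from the very concrete maps $G_{\tau,\alpha}$ and the right-multiplication map $H(w)=w\circ_f u$ with $u\in R_m$, one can even exhibit $X$, $Y$, and $\varphi$ explicitly: $\varphi$ is the automorphism of $M_k(K)$ induced by $\tau\in\Aut(K)_\sigma$ twisted by the base change $\alpha$, and $X$, $Y$ are the companion-type matrices of $u$ and of the change-of-basis coming from $G_{\tau,\alpha}$ (compare the companion-matrix reformulation $A_fB=B^\sigma A_g$ quoted just before the theorem). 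Making that explicit is optional for the statement but gives the cleanest proof, so I would present the abstract argument first and remark on the explicit description.
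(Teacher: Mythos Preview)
Your proposal is correct and follows the same route the paper takes: the paper does not give a separate proof of Theorem~\ref{t:main2} but simply strings together the two facts stated in the paragraphs immediately preceding it, namely (i) isotopic Petit division algebras yield equivalent MRD codes (invoked as a known consequence of the dictionary in \cite{CKWW2015}), and (ii) same $G$-orbit implies isotopy (Theorem~\ref{thm:11}/\ref{thm:main12}). Your write-up actually supplies more detail than the paper does on step~(i), sketching why an isotopy $(\phi,\psi,\chi)$ translates into a code equivalence $C=X(C')^{\varphi}Y$; the paper takes this implication for granted.
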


\section{Finite fields} \label{sec:finite}

Let  $K=\mathbb{F}_{q^n}$, $M=\mathbb{F}_{q^m}$ with ${\rm Gal}(M/F)=\langle \varphi \rangle$ $\varphi(c)=c^{q}$, $F=\mathbb{F}_{q}=\mathbb{F}_{p^h}$ and  $F_0=\mathbb{F}_{p}$. Then
$$G=\mathbb{F}_{q}^\times\rtimes {\rm Gal}(\mathbb{F}_{q^n}/{F}_{p}).$$
Let $A(q,n,m)$ denote the number of isotopy classes of semifields $R/Rf$ of order $q^{mn}$ defined by monic irreducible $f\in \mathbb{F}_{q^n}[t;\sigma]$ of degree $m$ and
 $N(q,m)=|I(q,m)|$ denote the number of irreducible monic polynomials of degree $m$ in $\mathbb{F}_q[x]$. Then
 $$N(q,m)=\frac{q^m-\theta}{m}=\frac{1}{m}\sum_{s|m}\mu(s)q^{m/s},$$
 where $\theta$ is the number of elements in $\mathbb{F}_{q^m}$ contained in a proper subfield. 

Let $M(q,m)$ denote the number of orbits in $I(m)$ under the action of $G$.
We reformulate \cite[Theorem 12]{LS}, as counting the number of $G'$-orbits  requires the same proof.

\begin{corollary} \label{thm:12}
 The number of isotopism classes of semifields $R/Rf$ of order $q^{nm}$ obtained from irreducible skew polynomials $f$ of degree $m$ in $ \mathbb{F}_{q^n}[t;\sigma]$ is less or equal to the number $M(q,m)$ of $G$-orbits  on the set $I(m)$ of monic irreducible polynomials in $\mathbb{F}_q[x]$ of degree $m$.
 \end{corollary}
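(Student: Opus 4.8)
The plan is to combine Theorem \ref{thm:main12} with the fact that every isotopy class of semifields $R/Rf$ of order $q^{nm}$ coming from an irreducible $f$ of degree $m$ is pinned down by the $G$-orbit of the associated irreducible polynomial $\hat h\in I(q,m)$. First I would recall that for a finite field $K=\mathbb{F}_{q^n}$, the minimal central left multiple of an irreducible $f\in R$ of degree $m$ has maximal degree $mn$, so that $h(t)=\hat h(t^n)$ factors into $k=n$ irreducible skew polynomials and hence $m_0=mk/n=m$; this is exactly why in the finite field case the degree $m_0$ of $\hat h$ coincides with the degree $m$ of $f$, and why the relevant set of irreducible polynomials is $I(m)=I(q,m)$ rather than some $I(q,m_0)$ with $m_0\neq m$. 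Thus the bound of $f$ is always $\hat h(t^n)$ with $\hat h\in I(q,m)$.

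Next I would set up the assignment $[R/Rf]\mapsto$ ($G$-orbit of $\hat h$) where $\hat h\in I(q,m)$ is determined by $f^*=\hat h(t^n)$. The key point to verify is that this map from the set of isotopy classes of such semifields to the set $I(m)/G$ of $G$-orbits is well defined; but in the finite-field setting this is not even needed for the inequality — what matters is the surjectivity-type statement read correctly. Concretely, by Theorem \ref{thm:main12}, if $\hat h_1,\hat h_2\in I(q,m)$ lie in the same $G$-orbit then the corresponding $R/Rf$ and $R/Rg$ are isotopic. Hence the map $I(q,m)\to\{\text{isotopy classes}\}$ sending $\hat h$ to the isotopy class of any irreducible $f$ with $f^*=\hat h(t^n)$ factors through the quotient $I(q,m)/G$, and therefore its image — which is all of the isotopy classes of semifields $R/Rf$ with $f$ irreducible of degree $m$, since every such $f$ has some bound $\hat h(t^n)$ with $\hat h\in I(q,m)$ — has cardinality at most $|I(q,m)/G|=M(q,m)$. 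That gives $A(q,n,m)\le M(q,m)$, which is the assertion.

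The mild subtlety, and the step I expect to need the most care, is making sure that every irreducible $f\in\mathbb{F}_{q^n}[t;\sigma]$ of degree $m$ really does arise as an irreducible divisor of $\hat h(t^n)$ for a unique $\hat h\in I(q,m)$, and conversely that every $\hat h\in I(q,m)$ does give rise to at least one such irreducible $f$ (so that the map is genuinely defined on all of $I(q,m)$). The first half is Lemma \ref{le:Similar implies same mclm}(i) together with the fact that over a finite field the minimal central left multiple of an irreducible $f$ is irreducible in $F[x]$ of degree exactly $m$ (Theorem \ref{thm:main2}(ii) applies when $n$ is prime, and in general one uses $\deg(h)=mn$ maximal over finite fields together with $\deg(\hat h)=m/s$ and $s=1$ since $E_{\hat h}$ must then be a field of the right size); the second half is the statement, recorded after Theorem \ref{thm:main2}, that when $\deg(h)=mn$ is maximal and $\hat h$ is irreducible then $\hat h(t^n)$ has an irreducible factor $f$ in $R$ of degree $m$. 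With these in hand the counting argument of the previous paragraph goes through verbatim, reproducing the bound of \cite[Theorem 12]{LS} with $G$ in place of $G'$.
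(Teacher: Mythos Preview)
Your argument is correct and is exactly the route the paper intends: the corollary is stated as a direct reformulation of \cite[Theorem~12]{LS}, and the proof is the counting argument you give, namely that Theorem~\ref{thm:main12} makes the assignment $\hat h\mapsto[R/Rf]$ factor through $I(q,m)/G$ and surject onto the set of isotopy classes. One small remark: for the bare inequality you do not actually need the ``second half'' of your last paragraph (that every $\hat h\in I(q,m)$ admits an irreducible divisor $f$ of degree $m$); it suffices that every irreducible $f$ of degree $m$ produces some $\hat h\in I(q,m)$, since then the induced map from the relevant subset of $I(q,m)/G$ already hits every isotopy class.
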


The upper bound
 $A(q,n,m)\leq N(q,m)=\frac{q^m-\theta}{m}$ was strengthened to
 $$A(q,n,m)\leq M(q,m)$$
in \cite{LS} where it was also noted that
$$\frac{q^m-\theta}{mh(q-1)} \leq M(q,m)\leq \frac{q^m-\theta}{m}.$$

Let us now compute $M(q,m)$ explicitly. Let $L=\mathbb{F}_{q^{lcm(m,n)}}$ be the composite of $K$ and $M=\mathbb{F}_{q^m}$ and put $l=lcm(m,n)$.
 Let $\tau\in  {\rm Gal}(K/{F}_{0})={\rm Gal}(\mathbb{F}_{q^n}/{F}_{p})$, then $\mathbb{F}_{q^n}=\mathbb{F}_{p^{hn}}$ and so $\tau(u)=u^{p^r}$ for some integer $r$, $0\leq r <hn$. We know that $\tau$ extends uniquely to some automorphism $\tau\in {\rm Gal}(L/{F}_{p})$.
 Note that $\tau|_{\mathbb{F}_{q}}\in {\rm Gal}(\mathbb{F}_{q^n}/{F}_{p})$ depends only on $r$ modulo $h$, so that we then get $r\in \{0,1,\dots, h-1\}$. Restricting this extension of $\tau$ to $L$  to the subfield $M=\mathbb{F}_{q^m}$ of $L$,  means that now the action of $\tau$ on
 $\mathbb{F}_{q^m}$ depends on $r$ only modulo $hm$. Similarly, restricting  $\tau$ to $F$   means that then the action of $\tau$ on
 $\mathbb{F}_{q^m}$ depends on $r$ only modulo $h$.

We will look at the elements in $I(F,m_0)$ in our argument, i.e. at monic irreducible
 $h(x)=\sum_{i=0}^{m-1}h_ix^i\in \mathbb{F}_{q^{m}}$ of degree $m$,  and at their roots $a\in L$.

 If $h(a)=0$ then $h$ is the minimal polynomial of $a$, hence $\mathbb{F}_{q}(a)\cong \mathbb{F}_{q^m}$, and $\{a,a^{q^2},\dots,a^{q^{m-1}}\}$ are the $m$ distinct roots of $h$. There is a one-to-one correspondence between the monic irreducible $h(x)\in I(m)$ and the sets $\{a,a^{q^2},\dots,a^{q^{m-1}}\}$ for each $a\in \mathbb{F}_{q^{lcm(m,n)}}$ where $1,a,a^{q^2},\dots,a^{q^{m-1}}$ are linearly independent.

 If $h(c)=0$ then $h^{(\lambda,\tau)}(\lambda^{-1}c^{p^r})=0$, too, so that $h^{(\lambda,\tau)}=h$ if and only if the set $\{a,a^{q^2},\dots,a^{q^{m-1}}\}$ is invariant under the operation $a\mapsto \lambda^{-1}\tau(a)=\lambda^{-1}a^{p^r}$. This means there exists some integer $j$, $0\leq j\leq m-1$, such that
 $$\lambda^{-1}\tau(a)=\lambda^{-1}a^{p^r}=a^{q^j}$$
  which is equivalent to $\lambda= a^{p^r-q^j}$. Put $e_{r,j}=p^r-q^j$.

 For every positive integer $s$ that divides $m$,
  $\lambda= a^{e_{r,j}}$ has either 0 solutions or exactly $gcd(q^s-1,e_{r,j})$ solutions in $\mathbb{F}_{q^s}$. Let  $N_{r,j}(s,\lambda)$ be the number of solutions of $ \tau(a)=\lambda a^{q^j}$ that lie in the subfield $\mathbb{F}_{q^{s}}$ for any $s$ where $s|m$. Then
  $$N_{r,j}(s,\lambda)=gcd(q^s-1,e_{r,j}) \text{ if }\gamma^{(q^s-1)/gcd(q^s-1,e_{r,j})}=1$$
   and 0 otherwise.

   Let $\mu$ be the Moebius function. We  determine the number $S_{r,j}$ of all solutions of $ \tau(a)=a^{p^r}=\lambda a^{q^j}$ for any $a\in M$, which has a minimal polynomial of degree $m$ and where we consider all possible intermediate fields of $M/F$, for fixed $j$ and $r$. We get
 $$S_{r,j}(\lambda)=\sum_{d|m}\mu(d) N_{(r,j)}(\frac{m}{d},\lambda)$$
 by a straightforward inclusion/exclusion counting argument.

 Define  ${\rm Fix}(\lambda,\tau)=\{ h\in I(m)\,|\, h^{ (\lambda,\tau) }=h \}$. Each monic irreducible polynomial corresponds to exactly $m$ roots, so the number of  monic irreducible polynomials that are fixed by the action of $(\lambda,\tau)$ is
 $$|{\rm Fix}(\lambda,\tau)|$$
 $$=\frac{1}{m}|\{ a\in \mathbb{F}_{q^m} \,|\, [\mathbb{F}_{q}(a):\mathbb{F}_{q}]=m \text{ and there is positive integer } j \text{ with } \tau(a)=\lambda a^{q^j} \}| $$
 $$=\frac{1}{m}\sum_{j=0}^{m-1} S_{r,j}(\lambda).$$
The number $M(q,m)$  of distinct $G$-orbits on $I(m)$ is given by
 $$M(q,m)=\frac{1}{|G|}\sum_{(\lambda,\tau)\in G} |{\rm Fix}(\lambda,\tau)|=\frac{1}{(q-1)hn}\sum_{(\lambda,\tau)\in G} |{\rm Fix}(\lambda,\tau)|.$$
 Write $\tau_r$ for $\tau(u)=u^{p^r}$. We obtain
 $$M(q,m)=\frac{1}{(q-1)hn} 
 \sum_{\lambda\in \mathbb{F}_q^\times}\sum_{r=0}^{hn-1} |{\rm Fix}(\lambda,\tau_{r})|$$
 $$=\frac{1}{(q-1)hn}\sum_{\lambda\in \mathbb{F}_q^\times}\sum_{r=0}^{hn-1} \frac{1}{m} \sum_{j=0}^{m-1}
 \sum_{d|m}\mu(d) N_{(r,j)}(\frac{m}{d},\lambda).$$
 Now $|{\rm Fix}(\lambda,\tau)|$ only depends on $\tau|_F\in {\rm Gal}(\mathbb{F}_{q}/{F}_{p})$, and for every one of the $h$ different $\widetilde{\tau}\in {\rm Gal}(\mathbb{F}_{q}/{F}_{p})$,
 we have exactly $n$ possible extensions of  $\widetilde{\tau}$ to some $\tau\in {\rm Gal}(\mathbb{F}_{q^n}/{F}_{p})$ and thus also to ${\rm Gal}(L/F)$. Each extension acts differently on $K$ viewed as automorphism on $K$, but yields the same $|{\rm Fix}(\lambda,\tau)|$.
 We obtain that in fact
 $$\sum_{r=0}^{hn-1} |\mathrm{Fix}(\lambda,\tau_{r})|
= n \sum_{k=0}^{h-1} |\mathrm{Fix}(\lambda,\widetilde{\tau}_{k})|,$$
so that we can also express
$$ M(q,m) = \frac{1}{(q-1) h} \sum_{\lambda \in \mathbb{F}_q^\times} \sum_{k=0}^{h-1} |\mathrm{Fix}(\lambda,\widetilde{\tau}_{k})|$$
$$= \frac{1}{(q-1)hm}\sum_{\lambda\in \mathbb{F}_q^\times}\sum_{k=0}^{h-1}  \sum_{j=0}^{m-1}
 \sum_{d|m}\mu(d) N_{(k,j)}(\frac{m}{d},\lambda) ,$$
 where we are now counting the $N_{(k,j)}(\frac{m}{d},\lambda) $ with respect to the $h$ different choices that are possible for $\widetilde{\tau}$.

 \begin{theorem}\label{thm:countingorbits}
 The number  of distinct $G$-orbits on $I(m)$ is
 $$M(q,m)=\frac{1}{(q-1)hn}\sum_{\lambda\in \mathbb{F}_q^\times}\sum_{r=0}^{hn-1} \frac{1}{m} \sum_{j=0}^{m-1}
 \sum_{d|m}\mu(d) N_{(r,j)}(\frac{m}{d},\lambda).$$
  \end{theorem}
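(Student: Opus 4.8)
The proof is an application of the Cauchy--Frobenius (Burnside) orbit-counting lemma and amounts to collecting the identities established in the paragraphs preceding the statement. First I would record that $G=\mathbb{F}_q^\times\rtimes\mathrm{Aut}(K)_\sigma$ is finite of order $(q-1)hn$: for finite fields the norm $N_{K/F}\colon K^\times\to F^\times$ is surjective, so $N_{K/F}(K^\times)=\mathbb{F}_q^\times$, while $\mathrm{Aut}(K)_\sigma=\mathrm{Gal}(\mathbb{F}_{q^n}/\mathbb{F}_p)$ has order $hn$ since $\mathbb{F}_{q^n}=\mathbb{F}_{p^{hn}}$. The orbit-counting lemma then yields
$$M(q,m)=\frac{1}{(q-1)hn}\sum_{(\lambda,\tau)\in G}|\mathrm{Fix}(\lambda,\tau)|,$$
and parametrising $\mathrm{Gal}(\mathbb{F}_{q^n}/\mathbb{F}_p)$ by the Frobenius powers $\tau_r\colon u\mapsto u^{p^r}$, $0\le r<hn$, rewrites the outer sum as $\sum_{\lambda\in\mathbb{F}_q^\times}\sum_{r=0}^{hn-1}$.

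The substance is the evaluation of $|\mathrm{Fix}(\lambda,\tau_r)|$ via roots in $L=\mathbb{F}_{q^{\mathrm{lcm}(m,n)}}$. Using the bijection between $I(m)$ and the Frobenius orbits $\{a,a^q,\dots,a^{q^{m-1}}\}$ of elements $a\in L$ of exact degree $m$ over $F$, together with the fact (recalled above) that the roots of $h^{(\lambda,\tau_r)}$ are precisely $\{\lambda^{-1}a^{p^r}:h(a)=0\}$, one sees that $h^{(\lambda,\tau_r)}=h$ iff the root set of $h$ is stable under $a\mapsto\lambda^{-1}a^{p^r}$. Since $\lambda\in\mathbb{F}_q$ is fixed by $q$-power and $p^r$-th power commutes with $q$-th power, stability for one root forces it for all, so it is equivalent to the existence of $j\in\{0,\dots,m-1\}$ with $\lambda^{-1}a^{p^r}=a^{q^j}$, that is, $\lambda=a^{e_{r,j}}$ with $e_{r,j}=p^r-q^j$. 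For a fixed $a$ of exact degree $m$ this $j$ is unique modulo $m$; hence $\sum_{j=0}^{m-1}S_{r,j}(\lambda)$ counts each qualifying degree-$m$ element exactly once, and dividing by the $m$ roots of the corresponding polynomial gives $|\mathrm{Fix}(\lambda,\tau_r)|=\tfrac{1}{m}\sum_{j=0}^{m-1}S_{r,j}(\lambda)$.

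Finally I would insert the divisor-sum formula for $S_{r,j}(\lambda)$: for a single $j$ the solutions of $a^{p^r}=\lambda a^{q^j}$ in $\mathbb{F}_{q^s}$ (for $s\mid m$) number $N_{(r,j)}(s,\lambda)$, equal to $\gcd(q^s-1,e_{r,j})$ when $\lambda$ satisfies the corresponding $e_{r,j}$-th power condition in $\mathbb{F}_{q^s}^\times$ and $0$ otherwise, and a Möbius inversion over the divisors of $m$ isolates the solutions of exact degree $m$, giving $S_{r,j}(\lambda)=\sum_{d\mid m}\mu(d)N_{(r,j)}(m/d,\lambda)$. Substituting this together with the expression for $|\mathrm{Fix}(\lambda,\tau_r)|$ into the Burnside sum produces the stated formula.

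The only step demanding genuine care, rather than routine manipulation, is the middle one: verifying that the fixed-point condition on a polynomial reduces to a condition on a single root, that the $m$ Frobenius conjugates of a degree-$m$ element are distinct so the $I(m)$-to-orbit correspondence is exact, and, most importantly, that for a fixed degree-$m$ root the exponent $j$ is unique modulo $m$. This uniqueness is exactly what makes the factor $1/m$ correct and prevents double-counting across different $j$; the remaining steps are the arithmetic of cyclic groups and the inclusion--exclusion already displayed before the theorem.
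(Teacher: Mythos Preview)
Your proof is correct and follows essentially the same route as the paper: the derivation appearing in the paragraphs immediately preceding the theorem is precisely the Burnside computation you outline, passing from fixed polynomials to their root sets, reducing to the single-root condition $\lambda=a^{e_{r,j}}$, and then applying M\"obius inversion over divisors of $m$. If anything, you are slightly more careful than the paper in making explicit the uniqueness of $j$ modulo $m$ for a degree-$m$ root, which is exactly what justifies the factor $1/m$ and the absence of overcounting in the sum $\sum_{j=0}^{m-1}S_{r,j}(\lambda)$.
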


  We can now refine \cite[Theorem 12]{LS}.

\begin{corollary} \label{cor:12new}
 The number of isotopism classes of semifields $R/Rf$ of order $q^{nm}$ obtained from irreducible skew polynomials $f$ of degree $m$ in $ \mathbb{F}_{q^n}[t;\sigma]$ is less or equal to
 $$M(q,m)=\frac{1}{(q-1)hn}\sum_{\lambda\in \mathbb{F}_q^\times}\sum_{r=0}^{hn-1} \frac{1}{m} \sum_{j=0}^{m-1}
 \sum_{d|m}\mu(d) N_{(r,j)}(\frac{m}{d},\lambda).$$
 \end{corollary}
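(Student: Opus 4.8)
The plan is to combine the qualitative upper bound already established with the explicit orbit count just obtained. The starting point is Corollary~\ref{thm:12}: the number $A(q,n,m)$ of isotopism classes of semifields $R/Rf$ of order $q^{nm}$ arising from monic irreducible $f\in\mathbb{F}_{q^n}[t;\sigma]$ of degree $m$ is at most the number $M(q,m)$ of $G$-orbits on $I(m)$. This inequality is the content of \cite[Theorem 12]{LS} transported to our group $G=\mathbb{F}_q^\times\rtimes{\rm Gal}(\mathbb{F}_{q^n}/\mathbb{F}_p)$; conceptually it rests on Theorem~\ref{thm:main12} (via Theorems~\ref{thm:irred}, \ref{thm:easy} and \ref{thm:11}), which guarantees that two irreducible $f,g$ whose bounds $\hat h_1,\hat h_2\in I(m)$ lie in the same $G$-orbit give isotopic division algebras, so that the map sending an irreducible $f$ to the $G$-orbit of its $\hat h$ factors through a surjection from the set of $G$-orbits onto the set of isotopy classes — whence the inequality.

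It then remains only to substitute the value of $M(q,m)$ computed in Theorem~\ref{thm:countingorbits},
$$M(q,m)=\frac{1}{(q-1)hn}\sum_{\lambda\in \mathbb{F}_q^\times}\sum_{r=0}^{hn-1} \frac{1}{m} \sum_{j=0}^{m-1}
 \sum_{d|m}\mu(d)\, N_{(r,j)}\!\left(\tfrac{m}{d},\lambda\right),$$
which is itself obtained by applying the orbit-counting (Burnside) formula $M(q,m)=|G|^{-1}\sum_{(\lambda,\tau)\in G}|{\rm Fix}(\lambda,\tau)|$ together with the inclusion–exclusion expression $S_{r,j}(\lambda)=\sum_{d|m}\mu(d)N_{(r,j)}(m/d,\lambda)$ for the fixed-point counts and the identification $|{\rm Fix}(\lambda,\tau)|=\frac1m\sum_{j=0}^{m-1}S_{r,j}(\lambda)$. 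Chaining $A(q,n,m)\le M(q,m)$ with this identity yields exactly the asserted bound.

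I do not anticipate a genuine obstacle here: the statement is a formal consequence of Corollary~\ref{thm:12} and Theorem~\ref{thm:countingorbits}, both already in hand. The only point that deserves a sentence of care is that the counting in Theorem~\ref{thm:countingorbits} is carried out over $\tau\in{\rm Gal}(\mathbb{F}_{q^n}/\mathbb{F}_p)$ viewed inside ${\rm Aut}(K)_\sigma$ — matching the definition of $G$ used in Corollary~\ref{thm:12} — rather than over ${\rm Gal}(F/\mathbb{F}_p)$ as in \cite{LS}; since $|{\rm Fix}(\lambda,\tau)|$ depends only on $\tau|_F$ and each of the $h$ automorphisms of $F$ has exactly $n$ extensions to $K$, the two counts agree (as recorded in the discussion preceding Theorem~\ref{thm:countingorbits}), so the bound transfers without change. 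Hence the proof is just: apply Corollary~\ref{thm:12}, then insert the formula of Theorem~\ref{thm:countingorbits}.
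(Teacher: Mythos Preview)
Your proposal is correct and matches the paper's approach: the paper gives no separate proof of this corollary, treating it as an immediate consequence of Corollary~\ref{thm:12} (the bound $A(q,n,m)\le M(q,m)$) together with the explicit formula for $M(q,m)$ in Theorem~\ref{thm:countingorbits}. Your additional remark reconciling the count over ${\rm Gal}(\mathbb{F}_{q^n}/\mathbb{F}_p)$ with the count over ${\rm Gal}(\mathbb{F}_q/\mathbb{F}_p)$ is exactly the observation the paper records just before Theorem~\ref{thm:countingorbits}.
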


 As in  \cite[Remark 6]{LS}, we conclude that for all $n\geq 2$, the total number of isotopy classes of semifields $S_f$ defined by irreducible polynomials $f\in \mathbb{F}_{q^n}[t;\sigma]$ of degree $m$ for all choices of $\sigma$ with ${\rm Fix}(\sigma)=\mathbb{F}_q$ is bounded above by
$$\frac{\varphi(n)}{2}M(q,m).$$
Here $\varphi$ is Euler's totient function.

\subsection*{Acknowledgments} This paper was written during the second author's stay as a Simons Professor in Residence at the University of Ottawa in July and August 2025. She gratefully acknowledges the support of CRM and the Simons Foundation, and thanks the Department of Mathematics and Statistics for its hospitality and its congenial and inspiring atmosphere. Particular thanks go to Monica Nevins for the many fruitful and inspiring discussions that helped shape this paper.


\providecommand{\bysame}{\leavevmode\hbox to3em{\hrulefill}\thinspace}
\providecommand{\MR}{\relax\ifhmode\unskip\space\fi MR }
\providecommand{\MRhref}[2]{%
  \href{http://www.ams.org/mathscinet-getitem?mr=#1}{#2}
}
\providecommand{\href}[2]{#2}

\end{document}